\begin{document}



\title{Novel view on classical convexity theory}

\author{Vitali Milman}
\address{Tel-Aviv University, Tel-Aviv, 69978, Israel}
\email{milman@tauex.tau.ac.il}

\author{Liran Rotem}

\address{Technion – Israel Institute of Technology,  Haifa, 32000, Israel}
\email{lrotem@technion.ac.il}


\BeginPaper 


\theoremstyle{jmpagplain}
\newtheorem{fact}[theorem]{Fact}
\newtheorem{problem}[theorem]{Problem}

\newcommand\RR{\mathbb{R}}
\newcommand\pphi{\varphi}
\newcommand\dd{\mathrm{d}}
\newcommand\BB{\mathcal{B}}
\newcommand\FF{\mathcal{F}}
\newcommand\GG{\mathcal{G}}
\newcommand\KK{\mathcal{K}}
\newcommand\cof{\text{co\ensuremath{\pphi}}}
\newcommand\flower{\text{\ensuremath{\clubsuit}}}
\newcommand\conv{\operatorname{conv}}
\newcommand\outc{\operatorname{out}}
\newcommand\inc{\operatorname{in}}
\newcommand\cone{\operatorname{cone}}



\begin{abstract}
Let $B_{x}\subseteq\RR^{n}$ denote the Euclidean ball with diameter
$[0,x]$, i.e. with with center at $\frac{x}{2}$ and radius $\frac{\left|x\right|}{2}$.
We call such a ball a petal. A flower $F$ is any union of petals,
i.e. $F=\bigcup_{x\in A}B_{x}$ for any set $A\subseteq\RR^{n}$.
We showed earlier in \cite{Milman2019} that the family of all flowers
$\FF$ is in 1-1 correspondence with $\KK_{0}$ \textendash{} the
family of all convex bodies containing $0$. Actually, there are two
essentially different such correspondences. We demonstrate a number
of different non-linear constructions on $\FF$ and $\KK_{0}$. Towards
this goal we further develop the theory of flowers.

\key{convex bodies, flowers, spherical inversion, duality, powers, Dvoretzky's theorem}

\msc{52A20, 52A30, 52A23.}
\end{abstract}



\section{Introduction: Flowers}

We start with the Euclidean unit ball $B_{2}^{n}\subseteq\RR^{n}$.
We denote by $B(x,r)$ the Euclidean ball centered at $x$ and has
radius $r>0$. Let $\BB_{0}$ be the family of all balls which contain
$0$. In other words $B(x,r)\in\BB_{0}$ if and only if $\left|x\right|\le r$,
where $\left|x\right|$ is the Euclidean norm of $x$. Also, we write
$B_{x}=B\left(\frac{x}{2},\frac{\left|x\right|}{2}\right)$, i.e.
the ball that has $[0,x]$ as its diameter. We also write $B_{2}^{n}=B(0,1)$
for the unit ball. 
\begin{definition}
A \emph{flower} is any set of the form $F=\bigcup_{\alpha}B_{\alpha}$
for a collection of balls $\left\{ B_{\alpha}\right\} _{\alpha}\subseteq\BB_{0}$.
We denote the family of all flowers by $\FF$. 
\end{definition}

We state that every flower $F$ uniquely represents a pair $\left(K,K^{\circ}\right)$
where $K\in\KK_{0}$, i.e. a closed convex set containing $0$, and
$K^{\circ}$ is the canonical dual of $K$. More precisely, we call
$K$ the core of $F$ if 
\[
K=\left\{ x\in\RR^{n}:\ B_{x}\subseteq F\right\} .
\]
 Let $\phi$ be spherical inversion, i.e. $\phi(x)=\frac{x}{\left|x\right|^{2}}$
for $x\ne0$. For any star body $A$ (i.e. such that $\lambda A\subseteq A$
for all $0\le\lambda\le1$), define the co-image of $\phi$ by 
\[
\cof\left(\text{A}\right)=\overline{\left\{ x:\ x\notin\phi(A)\right\} }
\]
 (i.e. the closure of the complement $\phi(A)^{c}$). Note that the
closure is always radial. Then consider the set $T:=\cof\left(F\right)$.
\begin{fact}[\cite{Milman2019}]
For any flower $F$, the bodies $K$ and $T$ from the above construction
belong to $\KK_{0}$, and $T=K^{\circ}$. 
\end{fact}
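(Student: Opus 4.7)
My plan rests on the single algebraic identity that links a petal to a half-space under $\phi$. For $y\neq 0$, expanding $\left|y-\tfrac{x}{2}\right|^{2}\le\left(\tfrac{|x|}{2}\right)^{2}$ and dividing by $|y|^{2}$ yields
\[
y\in B_{x}\iff\langle\phi(y),x\rangle\ge 1,
\]
so $\phi$ carries the punctured petal $B_{x}\setminus\{0\}$ bijectively onto the closed affine half-space $H_{x}^{+}=\{z:\langle z,x\rangle\ge 1\}$. Writing $F=\bigcup_{x\in A}B_{x}$, inversion therefore turns $F$ into a union of affine half-spaces: $\phi(F\setminus\{0\})=\bigcup_{x\in A}H_{x}^{+}$. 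Essentially the whole assertion will be read off from this dictionary.

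From it the structure of $T$ is transparent. A short computation via $z=\phi(y)$ shows that $\phi(F)^{c}$ coincides with $\bigcap_{x\in A}\{z:\langle z,x\rangle<1\}$ (the origin trivially satisfies every strict inequality, and so does not need to be added separately). A $(1-\varepsilon)$-scaling $z\mapsto(1-\varepsilon)z$ converts any point with $\langle z,x\rangle\le 1$ for all $x\in A$ into one satisfying all the strict inequalities, and one concludes
\[
T=\overline{\phi(F)^{c}}=\bigcap_{x\in A}\{z:\langle z,x\rangle\le 1\}=A^{\circ}.
\]
Being the polar of a set, $T$ is convex, closed, and contains $0$, so $T\in\KK_{0}$.

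The same dictionary gives convexity of $K$ quickly. If $x_{1},x_{2}\in K$ and $y\in B_{\lambda x_{1}+(1-\lambda)x_{2}}$ with $y\neq 0$, the identity produces
\[
\lambda\langle\phi(y),x_{1}\rangle+(1-\lambda)\langle\phi(y),x_{2}\rangle\ge 1,
\]
so at least one of the two inner products is $\ge 1$, i.e.\ $y\in B_{x_{1}}\cup B_{x_{2}}\subseteq F$, hence $\lambda x_{1}+(1-\lambda)x_{2}\in K$. Closedness of $K$ is obtained from the same $(1-\varepsilon)$-scaling applied to a convergent sequence $x_{n}\to x$ in $K$ and to each $y\in B_{x}$: since $\langle\phi(y),x_{n}\rangle\to\langle\phi(y),x\rangle\ge 1$ one gets $(1-\varepsilon)y\in B_{x_{n}}\subseteq F$ for $n$ large, and letting $\varepsilon\to 0$ yields $y\in F$. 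Together with $0\in K$ (because $B_{0}=\{0\}$), this gives $K\in\KK_{0}$.

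Finally, for $T=K^{\circ}$: applying $\phi$ to $B_{x}\subseteq F$ for any $x\in K$ gives $H_{x}^{+}\subseteq\phi(F)$, which forces $\langle z,x\rangle\le 1$ for every $z\in T$ and so $T\subseteq K^{\circ}$. Conversely, since $K$ is the maximal admissible index set we may rewrite $F=\bigcup_{x\in K}B_{x}$; given $z\in K^{\circ}$, the scaled point $(1-\varepsilon)z$ satisfies $\langle(1-\varepsilon)z,x\rangle<1$ for every $x\in K$, so $\phi((1-\varepsilon)z)$ lies in no petal of $F$ and hence $(1-\varepsilon)z\in\phi(F)^{c}\subseteq T$; letting $\varepsilon\to 0$ places $z$ in $T$. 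The one point that requires any real care throughout is the bookkeeping of strict versus non-strict inequalities when passing between the petal picture and its half-space image, and this is handled uniformly by the $(1-\varepsilon)$-scaling.
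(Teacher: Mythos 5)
Your proof is essentially correct, and the strategy is the natural one: the identity $y\in B_{x}\iff\langle\phi(y),x\rangle\ge1$ (for $y\ne0$) exhibits $\phi$ as carrying punctured petals to closed affine half-spaces, after which the whole claim is polar-duality bookkeeping. The computation $T=A^{\circ}$ via the $(1-\varepsilon)$-scaling, the convexity of $K$ from the convex-combination argument, and the two inclusions $T\subseteq K^{\circ}$ and $K^{\circ}\subseteq T$ are all clean.

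The one gap is in your argument that $K$ is closed. You deduce $y\in F$ from $(1-\varepsilon)y\in F$ for all small $\varepsilon>0$, which requires $F$ itself to be closed, and an arbitrary union of petals need not be closed. For instance $F=\bigcup_{0<t<1}B_{t}=[0,1)\subseteq\RR$ has core $K=[0,1)\notin\KK_{0}$. The statement therefore tacitly assumes that $F$ is closed (equivalently, that $F=\cof(L)$ for some $L\in\KK_{0}$, or that the underlying set of petal apices is compact), which is precisely the convention the paper makes explicit at the start of Section~2. With that hypothesis stated your proof is complete. Note that the assertion $T=K^{\circ}$ does not need it: that part of your argument only uses that $T=\overline{\phi(F)^{c}}$ is closed, which holds by construction.
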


Every $K\in\KK_{0}$ it the core of a unique flower which we denote
by $F=K^{\flower}$, or sometimes by $F=\flower K$. The map $\flower:\KK_{0}\to\FF$
is called the flower map, and we denote its inverse (the core operation)
by $K=F^{-\flower}$. We therefore have one to one and onto maps
\[
\KK_{0}\xrightarrow{\flower}\FF\xrightarrow{\cof}\KK_{0},
\]
 and their composition is exactly the duality map: $\cof\left(K^{\flower}\right)=K^{\circ}$.
So, every flower $F$ ``sees'' simultaneously a convex body $K=F^{-\flower}$
and its dual $K^{\circ}=\cof\left(F\right)$. Since $\cof$ is an
involution, we obtain an equivalent definition of the class of flowers
$\FF$: we simply have $\FF=\cof\left(\KK_{0}\right)$, i.e. \textbf{flowers
are the complements of inversions of convex bodies} (containing the
origin). 

Also, these maps are uniquely defined by their order reversing/preserving
properties:
\begin{proposition}
\begin{enumerate}
\item Let $f:\FF\to\KK_{0}$ be a one to one and onto map such that $f$
and $f^{-1}$ preserve the order of inclusion (i.e. $F_{1}\subseteq F_{2}$
if and only if $f(F_{1})\subseteq f\left(F_{2}\right)$). Then there
exists an invertible linear map $u:\RR^{n}\to\RR^{n}$ such that $f(F)=u\left(F^{-\flower}\right)$. 
\item Let $g:\FF\to\KK_{0}$ be a one to one and onto map such that $g$
and $g^{-1}$ reverse the order of inclusion (i.e. $F_{1}\subseteq F_{2}$
if and only if $g(F_{1})\supseteq g\left(F_{2}\right)$). Then there
exists an invertible linear map $u:\RR^{n}\to\RR^{n}$ such that $g(F)=u\left(\cof(F)\right)$. 
\end{enumerate}
\end{proposition}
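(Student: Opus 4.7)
The plan is to reduce both parts to the classical characterization of order bijections of $\KK_{0}$, transported through the flower bijection $\flower:\KK_{0}\to\FF$. Because this bijection is itself order preserving in both directions, any order-preserving (respectively order-reversing) bijection $\FF\to\KK_{0}$ corresponds to an order-preserving (respectively order-reversing) bijection $\KK_{0}\to\KK_{0}$, and the structure of those is well understood.

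First I would verify that $\flower$ preserves inclusion together with its inverse. Writing $K^{\flower}=\bigcup_{x\in K}B_{x}$, the implication $K_{1}\subseteq K_{2}\Rightarrow K_{1}^{\flower}\subseteq K_{2}^{\flower}$ is immediate. Conversely, if $K_{1}^{\flower}\subseteq K_{2}^{\flower}$ and $x\in K_{1}$, then $B_{x}\subseteq K_{1}^{\flower}\subseteq K_{2}^{\flower}$, so the core characterization $K_{2}=\{x:B_{x}\subseteq K_{2}^{\flower}\}$ forces $x\in K_{2}$.

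With that in hand, in part (1) I would set $T=f\circ\flower:\KK_{0}\to\KK_{0}$, an inclusion-preserving bijection, and in part (2) I would set $T=g\circ\flower:\KK_{0}\to\KK_{0}$, an inclusion-reversing bijection. I would then invoke the classical structure theorems for such maps of $\KK_{0}$: every order-preserving bijection is of the form $T(K)=u(K)$ for some invertible linear $u:\RR^{n}\to\RR^{n}$, and every order-reversing bijection is of the form $T(K)=u(K^{\circ})$ for some such $u$ (for the order-preserving case compare Böröczky--Schneider; the order-reversing case on $\KK_{0}$ is due to Artstein-Avidan--Milman and reduces to the order-preserving case by precomposing with standard duality). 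Substituting back and using $\cof(K^{\flower})=K^{\circ}$ from the quoted fact, one obtains $f(F)=u(F^{-\flower})$ in part (1) and $g(F)=u\bigl((F^{-\flower})^{\circ}\bigr)=u(\cof F)$ in part (2).

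The main obstacle is the classical structure theorem for order bijections of $\KK_{0}$ invoked in the previous paragraph; granted that theorem, the proposition is essentially a translation via $\flower$. A fully self-contained argument would need to either cite or adapt those characterizations to the class $\KK_{0}$, but the flower-theoretic content of the proposition is captured entirely in the trivial observation that $\flower$ and $\cof$ have the stated monotonicity behavior.
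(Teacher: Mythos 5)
Your proof is correct and for part (1) exactly matches the paper: both compose $f$ with $\flower$ to obtain an order-preserving bijection of $\KK_{0}$ and then invoke the classical structure theorem (the paper cites Artstein-Avidan--Milman, noting Slomka for the precise non-involutive version) to conclude $f(F)=u(F^{-\flower})$. In part (2) you compose $g$ with $\flower$ to get an order-\emph{reversing} bijection of $\KK_{0}$ and invoke the order-reversing structure theorem $T(K)=u(K^{\circ})$, while the paper instead composes $g$ with $\cof$ (which is order-reversing and an involution) to get an order-\emph{preserving} bijection of $\KK_{0}$ and simply reuses the same theorem as in part (1); these two reductions are equivalent, since your identity $(F^{-\flower})^{\circ}=\cof F$ is exactly what makes the two transport maps interchangeable, and both arrive at $g(F)=u(\cof F)$.
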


\begin{proof}
For $(1)$, define $h:\KK_{0}\to\KK_{0}$ by $h(K)=f\left(K^{\flower}\right)$.
Then $h$ is a bijection and $h$, $h^{-1}$ preserve order. By a
theorem of \cite{Artstein-Avidan2008} it follows that $h\left(K\right)=u\left(K\right)$
for some invertible linear map $u:\RR^{n}\to\RR^{n}$ (Technically
Theorem 10 of \cite{Artstein-Avidan2008} assumes that $h$ is an
order-reversing \emph{involution}, but the proof works for our situation
as well. For a proof of the result as we use it see e.g. Theorem 2
of \cite{Slomka2011}, which proves a stronger statement and gives
references to other related works). Then 
\[
f(F)=h\left(F^{-\flower}\right)=u\left(F^{-\flower}\right)
\]
 as we wanted.

For $(2)$, note that $\cof\left(\cof\left(A\right)\right)=A$ for
all (radially closed) star bodies $A$. In particular the inverse
of the map $\cof:\FF\to\KK_{0}$ is also $\cof:\KK_{0}\to\FF$. If
we now define $h:\KK_{0}\to\KK_{0}$ by $h(K)=g\left(\cof\left(K\right)\right)$,
then again $h$ and $h^{-1}$ are order preserving bijections so $h(K)=u\left(K\right)$.
Hence 
\[
g(F)=h\left(\cof\left(F\right)\right)=u\left(\cof\left(F\right)\right).
\]
 
\end{proof}
The definitions of the flower as given above are equivalent to the
following third definition: \textbf{A flower is any set of the form
$F=\bigcup_{\alpha}B_{x_{\alpha}}$ for any set $\left\{ x_{\alpha}\right\} _{\alpha}\subseteq\RR^{n}$}.
Daniel Hug informed us that this definition was previously used in
the study of Voronoi tessellations, where it is sometimes called the
Voronoi flower of a convex body. The equivalence of these definitions
is, of course, a statement which should be proved (see \cite{Milman2019}).
It follows from the fact that every ball $B\in\BB_{0}$ is a flower
in this new sense. 

In the same paper we also present a fourth (equivalent) definition:
Let $h_{K}:S^{n-1}\to[0,\infty]$ be the supporting functional of
a convex body $K\in\KK_{0}$. Note that we consider $h_{K}$ as a
function only on the sphere $S^{n-1}=\left\{ x\in\RR^{n}:\ \left|x\right|=1\right\} $
and not as a 1-homogeneous function on $\RR^{n}$. Let $F$ be the
star body with radial function $r_{F}(\theta)=h_{K}(\theta)$ for
all $\theta\in S^{n-1}$. Then $F$ is a flower and $F=K^{\flower}$.
As the converse is also true,\textbf{ flowers are exactly the star
bodies whose radial function is convex }(as a function on the sphere,
meaning its $1$-homogeneous extension is convex on $\RR^{n}$). 

This last description of the flower map $\flower$ is very useful
in different computations and constructions we will describe. It was
actually the original definition given in \cite{Milman2019}. 

In \cite{Milman2019} we also introduced flower mixed volumes. Consider
any collection of flowers $\left\{ F_{i}\right\} _{i=1}^{m}$ in $\RR^{n}$
and non-negative integers $\left\{ \lambda_{i}\right\} _{i=1}^{m}$,
and construct a new flower by 
\[
\GG=\GG\left(\left\{ F_{i}\right\} _{i},\left\{ \lambda_{i}\right\} _{i}\right)=\left(\sum_{i=1}^{m}\lambda_{i}F_{i}^{-\flower}\right)^{\flower}.
\]
 Then $\left|\GG\right|$, the volume of $\GG$, is a homogeneous
polynomial of degree $n$: 
\[
\left|\GG\right|=\sum_{1\le i_{1},i_{2},\ldots,i_{n}\le m}V(F_{i_{1}},F_{i_{2}},\ldots,F_{i_{n}})\lambda_{i_{1}}\lambda_{i_{2}}\cdots\lambda_{i_{n}},
\]
 where as usual we take the coefficients $V(F_{i_{1}},F_{i_{2}},\ldots,F_{i_{n}})$
to be invariant with respect to permutations of their arguments. We
call these coefficients flower mixed volumes. For the cores $K_{i}=F_{i}^{-\flower}$
we also set 
\[
V_{\flower}(K_{1},K_{2},\ldots,K_{n})=V(F_{1},F_{2},\ldots,F_{n}),
\]
 and an explicit formula for these numbers is (see \cite{Milman2019})
\[
V_{\flower}(K_{1},K_{2},\ldots,K_{n})=\left|B_{2}^{n}\right|\cdot\int_{S^{n-1}}\prod_{i=1}^{n}h_{K_{i}}(\theta)\mathrm{d}\sigma(\theta).
\]

Finally, let us mention a few more facts from \cite{Milman2019} about
flowers: If $F=\bigcup_{x\in A\subseteq\RR^{n}}B_{x}$ is a flower,
then necessarily $F^{-\flower}=\conv A$. If $F_{1}$ and $F_{2}$
are flowers, so are both the radial sum $F_{1}\widetilde{+}F_{2}$
(defined by $r_{F_{1}\widetilde{+}F_{2}}=r_{F_{1}}+r_{F_{2}})$ and
the Minkowski sum $F_{1}+F_{2}$. Note that we are taking the Minkowski
sum of not-necessarily-convex sets. Also, if $F$ is a flower and
$E\subseteq\RR^{n}$ is any linear subspace, then $F\cap E$ is also
a flower, and in fact 
\[
P_{E}\left(F^{-\flower}\right)=\left(F\cap E\right)^{-\flower},
\]
 where $P_{E}$ denotes the orthogonal projection onto $E$. 

Moreover $P_{E}F$ is also a flower, even though we do not have an
independent description of $\left(P_{E}F\right)^{-\flower}$. Since
$P_{E}F\supseteq F\cap E$ we know that $\left(P_{E}F\right)^{-\flower}\supseteq P_{E}\left(F^{-\flower}\right)$,
but we do not have a good understanding of this set. 

Finally, we mention that if $F$ is a flower so is its convex hull
$\conv F$. In this case there is a description in \cite{Milman2019}
of $\left(\conv F\right)^{-\flower}$ in terms of $F^{-\flower}$
and the so-called reciprocity map, but we will not explain it further
here. 

\section{Non-linear constructions using flowers}

To avoid uninteresting technicalities, let us assume from this point
on that our flowers are always compact and contain the origin at their
interior. Recall from the introduction that if $F=\bigcup_{x\in A\subseteq\RR^{n}}B_{x}$
is such a flower then the core $K=F^{-\flower}$ satisfies $K=\conv A$.
However, this condition does not define $A$ uniquely, so there are
many representations of the same flower $F$ by different sets $A$.
We would like to select one canonical representation. We call $F=\bigcup_{x\in A}B_{x}$
canonical if we have $A=\partial K$, where $\partial K$ denotes
the boundary of $K$. This means that if $F=\bigcup B_{x}$ is a canonical
representation, then for every $\theta\in S^{n-1}$ there is a unique
ball in the set $\left\{ B_{x}\right\} $ such that $x=r_{\theta}\theta$
for some $r_{\theta}\ge0$. 
\begin{definition}
\label{def:naive-f}Let $F=\bigcup_{\theta\in S^{n-1}}B_{r_{\theta}\theta}$
be any flower in its canonical representation. Consider any function
$f:[0,\infty)\to[0,\infty)$ such that $f(0)=0$. We define a new
flower $f(F)$ by 
\begin{equation}
f(F)=\bigcup_{\theta\in S^{n-1}}B_{f(r_{\theta})\theta}.\label{eq:function-to-flower}
\end{equation}
 
\end{definition}

Recall that for a given function $g:S^{n-1}\to(0,\infty)$, the Alexandrov
body of $g$ is defined by 
\[
A[g]=\left\{ x\in\RR^{n}:\ \left\langle x,\theta\right\rangle \le g(\theta)\text{ for all }\theta\in S^{n-1}\right\} .
\]
 In other words, $A[g]$ is the largest convex body with $h_{A[g]}\le g$.
We have the following simple claim:
\begin{proposition}
For every flower $F=\bigcup_{\theta\in S^{n-1}}B_{r_{\theta}\theta}$
we have $\cof\left(f(F)\right)=A\left[\frac{1}{f(r_{\theta})}\right].$ 
\end{proposition}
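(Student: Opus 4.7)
The plan is to apply the identity $\cof(F')=\bigl((F')^{-\flower}\bigr)^{\circ}$ from the Fact recalled in Section~1 and reduce the whole statement to a computation of a polar body. First I would observe that
\[
f(F)=\bigcup_{\theta\in S^{n-1}}B_{f(r_{\theta})\theta}
\]
is, by construction, a union of petals $B_{y}$ with $y=f(r_{\theta})\theta\in\RR^{n}$, hence a flower (in either the first or the third of the equivalent definitions from Section~1). Then the formula $F^{-\flower}=\conv A$, valid whenever $F=\bigcup_{x\in A}B_{x}$, identifies the core of $f(F)$ as
\[
K_{f}:=(f(F))^{-\flower}=\conv\bigl\{f(r_{\theta})\theta:\theta\in S^{n-1}\bigr\}.
\]

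The next step is to compute $K_{f}^{\circ}$ directly from the definition of the polar. Since $K_{f}$ is the convex hull of the points $\{f(r_{\theta})\theta\}_{\theta}$, a point $x$ lies in $K_{f}^{\circ}$ precisely when $\langle x,f(r_{\theta})\theta\rangle\le1$ for every $\theta\in S^{n-1}$. Dividing each such inequality by $f(r_{\theta})$ (with the convention $1/0=+\infty$) rewrites this as $\langle x,\theta\rangle\le 1/f(r_{\theta})$ for every $\theta\in S^{n-1}$, which is exactly the defining condition of the Alexandrov body $A\bigl[1/f(r_{\theta})\bigr]$. Combined with $\cof(f(F))=K_{f}^{\circ}$, this yields the claim.

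The only delicate point is the possibility that $f(r_{\theta})=0$ for some $\theta$: in that case the petal $B_{f(r_{\theta})\theta}$ collapses to $\{0\}$, contributes nothing to the convex hull defining $K_{f}$, and produces the tautology $0\le1$ in the polar; treating the corresponding datum as $+\infty$ in the definition of $A[\cdot]$ absorbs this case into the general formula. I do not foresee any other obstacle — the argument is essentially the standard ``polar of a convex hull'' computation, transported through the flower/core dictionary provided by the cited Fact.
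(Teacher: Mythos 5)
Your proof is correct. Let me compare it with the paper's argument, since the two take genuinely different (though closely related) routes.

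The paper's proof works one level lower, directly with the geometry of the spherical inversion: it uses the classical fact that a sphere through the origin is mapped to an affine hyperplane, which in the paper's notation reads $\cof(B_{c\theta}) = H(\theta, 1/c)$, and then invokes the order-reversing property of $\cof$ to turn the union over petals into an intersection of half-spaces, landing directly on $A[1/f(r_\theta)]$. Your proof instead stays one level higher, inside the flower/core/polar dictionary: you pass through the core $K_f = (f(F))^{-\flower} = \conv\{f(r_\theta)\theta\}$ via the identity $F^{-\flower}=\conv A$, then apply the Fact $\cof(F) = (F^{-\flower})^{\circ}$ and compute the polar of a convex hull of points by hand. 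In the end both arguments produce the same intersection $\bigcap_{\theta} H(\theta, 1/f(r_\theta))$; the paper reaches it by applying $\cof$ petal by petal, while you reach it by computing $\{f(r_\theta)\theta\}^{\circ}$ point by point. The paper's route is marginally more self-contained in that it uses one geometric fact about inversion rather than two dictionary identities; your route is perhaps more mechanical and would generalize immediately to any situation where the core of the modified flower is explicit. Your handling of the degenerate case $f(r_\theta)=0$ via the convention $1/0=+\infty$ is also fine, and matches the implicit reading of $A[\cdot]$ needed in the paper as well.
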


\begin{proof}
Recall the following property of the spherical inversion $\phi$:
For any sphere $S\subseteq\RR^{n}$ with $0\in S$ the inversion
$\phi(S)$ is an affine hyperplane. Stating the same using our notation,
for every $\theta\in S^{n-1}$ and $c\ge0$ we have $\cof\left(B_{c\theta}\right)=H\left(\theta,\frac{1}{c}\right)$,
where $H(\theta,a)$ is the half-space defined by 
\[
H\left(\theta,a\right):=\left\{ x\in\RR^{n}:\ \left\langle x,\theta\right\rangle \le a\right\} .
\]
Since $\cof$ is order reversing we conclude that indeed
\[
\cof\left(f(F)\right)=\bigcap_{\theta\in S^{n-1}}\cof\left(B_{f(r_{\theta})\theta}\right)=\bigcap_{\theta\in S^{n-1}}H\left(\theta,\frac{1}{f(r_{\theta})}\right)=A\left[\frac{1}{f(r_{\theta})}\right].
\]
\end{proof}
We may apply $f$ to convex bodies $K$ by setting $f(K)=\left(f\left(K^{\flower}\right)\right)^{-\flower}.$
We again have an equivalent description:
\begin{proposition}
\label{prop:f-convex-formula}Let $S$ be the star body with radial
function $r_{S}(\theta)=f(r_{K}(\theta))$. Then 
\[
f(K)=A\left[\frac{1}{f(r_{K}(\theta))}\right]^{\circ}=\conv S.
\]
\end{proposition}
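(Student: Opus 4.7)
The plan is to derive both equalities directly from the preceding proposition together with two facts stated earlier in the paper: that the composition $\cof\circ\flower$ is classical polarity on $\KK_{0}$, and that the core of $F=\bigcup_{x\in A}B_{x}$ is $\conv A$. The key preliminary observation is that the canonical representation of the flower $K^{\flower}$ is $K^{\flower}=\bigcup_{\theta\in S^{n-1}}B_{r_{K}(\theta)\theta}$, so that the scalars $r_{\theta}$ appearing in the preceding proposition, when applied to $F=K^{\flower}$, are precisely the values of the radial function of the core $K$ (not the radial function of the flower itself, which is $h_{K}$).

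For the first equality, I would start from the definition $f(K)=\left(f(K^{\flower})\right)^{-\flower}$ and use that for every flower $G$ one has $\cof(G)=\left(G^{-\flower}\right)^{\circ}$, since $\cof\circ\flower$ is polarity. Taking $G=f(K^{\flower})$, this gives
\[
\cof\left(f(K^{\flower})\right)=\left(\left(f(K^{\flower})\right)^{-\flower}\right)^{\circ}=f(K)^{\circ}.
\]
On the other hand, the preceding proposition applied to $F=K^{\flower}$ evaluates the same left-hand side as $A\!\left[\tfrac{1}{f(r_{K}(\theta))}\right]$. Combining the two expressions and using biduality for bodies in $\KK_{0}$ (which applies since the flowers here are assumed compact with $0$ in their interior) yields $f(K)=A\!\left[\tfrac{1}{f(r_{K}(\theta))}\right]^{\circ}$.

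For the second equality, I would appeal to Definition~\ref{def:naive-f} to write
\[
f(K^{\flower})=\bigcup_{\theta\in S^{n-1}}B_{f(r_{K}(\theta))\theta},
\]
a presentation of the flower as a union of petals indexed by $A=\{f(r_{K}(\theta))\theta:\theta\in S^{n-1}\}$. The introduction's identity $F^{-\flower}=\conv A$ then gives $f(K)=\conv A$. But $A$ is exactly the radial boundary of the star body $S$ with radial function $f(r_{K})$, and since $S$ is the union of the segments $[0,f(r_{K}(\theta))\theta]$, one has $\conv S=\conv A$, which finishes the proof.

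I do not expect any substantial obstacle; the argument is bookkeeping built on the preceding proposition. The one point that requires genuine care is not to confuse the two natural scalar functions associated to $K^{\flower}$ — the scalar $r_{\theta}=r_{K}(\theta)$ parametrising the canonical family of petals, and the radial function of $K^{\flower}$ as a star body, which equals $h_{K}(\theta)$. Only the former enters the formula for $f(F)$.
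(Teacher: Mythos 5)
Your proof is correct. For the first equality $f(K)=A\bigl[\tfrac{1}{f(r_{K}(\theta))}\bigr]^{\circ}$, your argument is essentially the paper's: both rely on the preceding proposition together with the identity $\cof\circ\flower=\;^{\circ}$ (equivalently, $\cof(G)=(G^{-\flower})^{\circ}$) and biduality; you just equate two expressions for $\cof\bigl(f(K^{\flower})\bigr)$ instead of chasing $f(K)=f(F)^{-\flower}$ forward, which is a cosmetic rearrangement.

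For the second equality $f(K)=\conv S$, your route is genuinely different and in fact more direct. The paper \emph{derives} $\conv S$ from the already-established $A[\cdot]^{\circ}$, by writing the Alexandrov body as an intersection of half-spaces, using $H(\theta,c)^{\circ}=[0,\tfrac{1}{c}\theta]$, and the fact that polarity turns intersections into convex hulls of unions of polars. You instead bypass polarity entirely: you read off from Definition~\ref{def:naive-f} that $f(K^{\flower})=\bigcup_{\theta}B_{f(r_{K}(\theta))\theta}$ and invoke the introduction's identity $F^{-\flower}=\conv A$ for a flower $F=\bigcup_{x\in A}B_{x}$, getting $f(K)=\conv\{f(r_{K}(\theta))\theta:\theta\in S^{n-1}\}=\conv S$ immediately. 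This makes the second equality independent of the first and of the preceding proposition, which is cleaner; the paper's route has the mild advantage of keeping the two equalities visibly linked as one chain through the Alexandrov body. Your caution about distinguishing the parametrising scalars $r_{\theta}=r_{K}(\theta)$ from the radial function $r_{K^{\flower}}=h_{K}$ of the flower is exactly the point that matters, and you handle it correctly.
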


\begin{proof}
Write $F=K^{\flower}$, then the canonical representation of $F$
is $F=\bigcup_{\theta\in S^{n-1}}B_{r_{K}(\theta)\theta}$. Hence
\[
f(K)=f(F)^{-\flower}=\left(\cof\left(A\left[\frac{1}{f(r_{K}(\theta))}\right]\right)\right)^{-\flower}=A\left[\frac{1}{f(r_{K}(\theta))}\right]^{\circ}.
\]
 Recalling that $H(\theta,c)^{\circ}=[0,\frac{1}{c}\theta]$ we conclude
that 
\begin{align*}
f(K) & =\left(\bigcap_{\theta\in S^{n-1}}H\left(\theta,\frac{1}{f(r_{K}(\theta))}\right)\right)^{\circ}=\conv\bigcup_{\theta\in S^{n-1}}H\left(\theta,\frac{1}{f(r_{K}(\theta))}\right)^{\circ}\\
 & =\conv\bigcup_{\theta\in S^{n-1}}[0,f(r_{K}(\theta))]=\conv S.
\end{align*}
\end{proof}
The above definition of $f(F)$ is a naive one. The problem is that
the representation of $f(F)$ in \eqref{eq:function-to-flower} does
not have to be canonical. As a result, for two functions $f_{1},f_{2}:[0,\infty)\to[0,\infty)$
we may have 
\begin{equation}
\left(f_{1}\circ f_{2}\right)(F)\ne f_{1}\left(f_{2}(F)\right).\label{eq:composition-broken}
\end{equation}
However, we sometimes have a one-sided inclusion:
\begin{proposition}
\label{prop:func-inclusion}If $f_{1}$ is monotone increasing then
$\left(f_{1}\circ f_{2}\right)(F)\subseteq f_{1}\left(f_{2}(F)\right)$
and $\left(f_{1}\circ f_{2}\right)(K)\subseteq f_{1}\left(f_{2}(K)\right)$
for every flower $F$ and convex body $K$.
\end{proposition}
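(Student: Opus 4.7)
The plan is to prove the inclusion for flowers first and then transfer it to convex bodies via the order-preservation of the core map. The key observation is that the expression $f_2(F) = \bigcup_{\theta}B_{f_2(r_\theta)\theta}$ obtained from Definition \ref{def:naive-f} need not itself be a canonical representation: the core of $f_2(F)$ is the convex hull of the points $\{f_2(r_\theta)\theta\}$, so the canonical radii $r_\theta' := r_{f_2(F)^{-\flower}}(\theta)$ may strictly exceed $f_2(r_\theta)$. This is exactly the source of the failure in \eqref{eq:composition-broken}, but when $f_1$ is increasing the extra size only makes $f_1(f_2(F))$ \emph{larger}, which is what we want.

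Concretely, let $F=\bigcup_{\theta\in S^{n-1}}B_{r_\theta\theta}$ be the canonical representation of $F$. By the fact quoted from \cite{Milman2019} (namely $F^{-\flower}=\conv A$ for any representation by petals through points of $A$), the core of $f_2(F)$ equals $K':=\conv\{f_2(r_\theta)\theta:\theta\in S^{n-1}\}$, so $r_\theta'=r_{K'}(\theta)\ge f_2(r_\theta)$ for every $\theta$. Monotonicity of $f_1$ then gives $f_1(r_\theta')\ge f_1(f_2(r_\theta))$, and since petals along a fixed ray are nested ($B_{a\theta}\subseteq B_{b\theta}$ whenever $0\le a\le b$), we conclude
\[
B_{f_1(f_2(r_\theta))\theta}\subseteq B_{f_1(r_\theta')\theta}\subseteq f_1(f_2(F)).
\]
Taking the union over $\theta\in S^{n-1}$ yields $(f_1\circ f_2)(F)\subseteq f_1(f_2(F))$.

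For the convex body version, recall the definition $f(K)=\bigl(f(K^{\flower})\bigr)^{-\flower}$ and the order-preservation of the core operation $^{-\flower}:\FF\to\KK_{0}$ (if $F_1\subseteq F_2$ then every petal contained in $F_1$ lies in $F_2$, hence $F_1^{-\flower}\subseteq F_2^{-\flower}$). Applying the flower inclusion to $F=K^{\flower}$ and taking cores gives
\[
(f_1\circ f_2)(K)=\bigl((f_1\circ f_2)(K^{\flower})\bigr)^{-\flower}\subseteq\bigl(f_1(f_2(K^{\flower}))\bigr)^{-\flower}=f_1(f_2(K)),
\]
using that $f_2(K)^{\flower}=f_2(K^{\flower})$ since $^{\flower}$ and $^{-\flower}$ are mutually inverse.

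The only subtle point is the inequality $r_\theta'\ge f_2(r_\theta)$; once this is recorded, everything else is a direct chain of monotone comparisons. It is worth noting that no hypothesis on $f_2$ is needed, and the inclusion becomes an equality precisely when $\bigcup_\theta B_{f_2(r_\theta)\theta}$ happens to already be in canonical form — for instance when $f_2$ is affine and preserves convexity of the radial function.
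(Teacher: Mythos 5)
Your proof is correct and rests on the same key observation as the paper's: the convex hull (core) step can only enlarge the radial function, so $r_{f_2(F)^{-\flower}}\ge f_2(r_F)$, and monotonicity of $f_1$ then preserves the inclusion; the paper carries this out directly in the convex-body/star-body language via Proposition~\ref{prop:f-convex-formula}, while you phrase it with nested petals and then pass to bodies by order-preservation of the core map, which is an equivalent formulation.
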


\begin{proof}
If $S$ is the star body with $r_{S}=f_{2}\left(r_{K}\right)$ then
$f_{2}(K)=\conv S\supseteq S$. Since $f_{1}$ is increasing we have
\[
f_{1}\left(r_{f_{2}(K)}\right)\ge f_{1}\left(r_{S}\right)=f_{1}\left(f_{2}\left(r_{K}\right)\right).
\]
Therefore if $S_{1}$ has radial function $r_{S_{1}}=f_{1}\left(r_{f_{2}(K)}\right)$
and $S_{2}$ has radial function $\left(f_{1}\circ f_{2}\right)(r_{K})$
then $S_{1}\supseteq S_{2}$. Hence $f_{1}\left(f_{2}(K)\right)=\conv S_{1}\supseteq\conv S_{2}=\left(f_{1}\circ f_{2}\right)(K)$. 
\end{proof}
Despite this problem, Definition \ref{def:naive-f} is still a useful
one. For example, let us consider the function $f(x)=x^{\lambda}$
for some $0<\lambda<1$. Then the body $f(K)$ is related to the ``logarithmic
Minkowski addition'' of Böröczky, Lutwak, Yang and Zhang (see \cite{Boroczky2012}).
To be more precise, fix two convex bodies $K$ and $T$ containing
the origin , $0<\lambda<1$ and any $p>0$. Then the $p$-mean of
$K$ and $T$ is defined by 
\[
(1-\lambda)\cdot K+_{p}\lambda\cdot T=A\left[\left((1-\lambda)h_{K}^{p}+\lambda h_{T}^{p}\right)^{1/p}\right].
\]

Note that when $p>1$ the function on the right hand side is convex,
and hence is exactly equal to the support function of $(1-\lambda)\cdot K+_{p}\lambda\cdot T$,
but this is no longer the case for $0<p<1$. Taking the limit $p\to0^{+}$
we define the 0-mean, or logarithmic mean of $K$ and $T$ to be 
\[
(1-\lambda)\cdot K+_{0}\lambda\cdot T=A\left[h_{K}^{1-\lambda}h_{T}^{\lambda}\right].
\]
Then, using Proposition \ref{prop:f-convex-formula} it easy to check
that if $f(x)=x^{\lambda}$ then 
\[
f(K)=\left((1-\lambda)\cdot B_{2}^{n}+_{0}\lambda\cdot K^{\circ}\right)^{\circ}.
\]
 Such ``dual logarithmic means'' were studied by Saroglou \cite{Saroglou2016}.
Using our notation, he showed the following:
\begin{theorem}[Saroglou, \cite{Saroglou2016} ]
\label{thm:dual-BMI}For every convex body $K$ containing the origin
we have $\left|f(K)\right|\le\left|B_{2}^{n}\right|^{1-\lambda}\left|K\right|^{\lambda}$. 
\end{theorem}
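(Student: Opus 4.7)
The plan is to recognize the target inequality as a Blaschke-Santaló-type statement for the logarithmic Minkowski combination of $B_{2}^{n}$ and $K^{\circ}$, and reduce it in two steps to (i) the classical Blaschke-Santaló inequality and (ii) a log-Brunn-Minkowski lower bound in the special case where one of the bodies is the Euclidean ball.

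First I would invoke Proposition \ref{prop:f-convex-formula} together with the identity $h_{K^{\circ}}(\theta)=1/r_{K}(\theta)$ to rewrite
\[
f(K)^{\circ}=A\!\left[\tfrac{1}{r_{K}(\theta)^{\lambda}}\right]=A\!\left[h_{B_{2}^{n}}^{1-\lambda}\,h_{K^{\circ}}^{\lambda}\right]=(1-\lambda)\cdot B_{2}^{n}+_{0}\lambda\cdot K^{\circ},
\]
so the conclusion is equivalent to $\bigl|\bigl((1-\lambda)\cdot B_{2}^{n}+_{0}\lambda\cdot K^{\circ}\bigr)^{\circ}\bigr|\le|B_{2}^{n}|^{1-\lambda}|K|^{\lambda}$. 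For the main argument I would assume $K$ (and hence $L:=f(K)^{\circ}$) is origin-symmetric, the general case following by moving to the Santaló point of $f(K)$ together with a standard approximation. Blaschke-Santaló applied to $L$ yields $|f(K)|=|L^{\circ}|\le|B_{2}^{n}|^{2}/|L|$, and Blaschke-Santaló applied to $K$ yields $|K^{\circ}|\ge|B_{2}^{n}|^{2}/|K|$. Plugging both into the desired bound reduces the theorem to the lower estimate
\[
\bigl|(1-\lambda)\cdot B_{2}^{n}+_{0}\lambda\cdot K^{\circ}\bigr|\ge|B_{2}^{n}|^{1-\lambda}\,|K^{\circ}|^{\lambda}.
\]

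The main obstacle is this last display: it is the log-Brunn-Minkowski inequality, which in full generality is a well-known open conjecture. Happily, the case required here, where one of the two bodies is the Euclidean ball, is tractable. Two natural routes are (i) a Steiner-symmetrization argument exploiting the rotational invariance of $B_{2}^{n}$ to reduce to a one-dimensional calculation, or (ii) taking the limit $p\to 0^{+}$ in the $L^{p}$-Brunn-Minkowski inequality $\bigl|(1-\lambda)\cdot B_{2}^{n}+_{p}\lambda\cdot K^{\circ}\bigr|^{p/n}\ge(1-\lambda)|B_{2}^{n}|^{p/n}+\lambda|K^{\circ}|^{p/n}$, which is classical for $p\ge 1$ and extends to small positive $p$ precisely because one of the bodies is a ball. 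Either route closes the argument. A parallel plan which avoids log-Brunn-Minkowski entirely would work with the description $f(K)=\conv S$, where $r_{S}=r_{K}^{\lambda}$: polar-coordinate integration together with Hölder's inequality (with exponents $1/\lambda$ and $1/(1-\lambda)$) cleanly yields $|S|\le|B_{2}^{n}|^{1-\lambda}|K|^{\lambda}$, and the genuine difficulty becomes the passage from $|S|$ to $|\conv S|$. This passage, trivial in dimension one but highly nontrivial in general, is where the real convex-geometric content of the theorem resides and where I expect to spend most of the effort.
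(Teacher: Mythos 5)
The paper simply cites Saroglou for this theorem and supplies no proof of its own, so there is no internal argument to compare against; I can only assess your proposal on its own merits, and it has a genuine gap. The first route contains a sign error that is not repairable: Blaschke--Santal\'o gives $|K|\,|K^{\circ}|\le|B_2^n|^2$, hence $|K^{\circ}|\le|B_2^n|^2/|K|$, \emph{not} $\ge$ as you wrote. Tracing your reduction with the correct sign, from $|f(K)|\le|B_2^n|^2/|L|$ and the hoped-for log-BM bound $|L|\ge|B_2^n|^{1-\lambda}|K^{\circ}|^{\lambda}$ you obtain $|f(K)|\le|B_2^n|^{1+\lambda}/|K^{\circ}|^{\lambda}$; to descend from this to the target $|B_2^n|^{1-\lambda}|K|^{\lambda}$ you would need $|K|\,|K^{\circ}|\ge|B_2^n|^2$, i.e.\ the \emph{reverse} Santal\'o inequality with the Euclidean constant, which fails for every non-ellipsoidal body. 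So even granting log-Brunn--Minkowski for the pair $(B_2^n,K^{\circ})$, the chain does not close. Moreover, the assertion that log-BM is ``tractable'' when one body is a Euclidean ball should not be taken for granted: for $n\ge3$ this case is not known in general, and passing to the limit $p\to0^+$ in $L^p$-BM does not become valid for small $p>0$ merely because one of the bodies is a ball.

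Your alternative route correctly isolates where the content really is, but does not supply it. H\"older in polar coordinates indeed gives $|S|\le|B_2^n|^{1-\lambda}|K|^{\lambda}$ for the star body $S$ with $r_S=r_K^{\lambda}$, but $f(K)=\conv S\supseteq S$, so $|f(K)|\ge|S|$ and the inequality you obtained points the wrong way. The real difficulty is precisely to bound $|\conv S|$ from above despite the convex hull only enlarging $S$, and nothing in your sketch makes progress on that step --- it is, as you note, where the theorem actually lives, and a proof would need an argument targeted at controlling the convexification rather than reducing to Santal\'o or to log-BM.
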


The dual question, asking for a lower bound on the volume of $(1-\lambda)\cdot K+_{0}\lambda\cdot T$,
is an open problem in convexity known as the log-Brunn-Minkowski conjecture
which was introduced in \cite{Boroczky2012}. We will not discuss
it further here.

\section{Power Functions}

Before we continue to develop a general approach for constructing
``functions of convex bodies'', let us ``correct'' the problem
in \eqref{eq:composition-broken} for the family of functions $f_{\lambda}(x)=x^{\lambda}$,
i.e. let us build a (convex body valued) power function $K^{\lambda}$
that has the semigroup property $\left(K^{\lambda}\right)^{\mu}=K^{\lambda\mu}$.
This will be done not for every positive value of $\lambda$, as we
will see:
\begin{theorem}
On the class of flowers there are maps $F\mapsto F^{\lambda}$, $0\le\lambda<\infty$,
with the following properties:
\begin{enumerate}
\item $F^{1}=F$ and $F^{0}=B_{2}^{n}$. 
\item If $F_{1}\subseteq F_{2}$ then $F_{1}^{\lambda}\subseteq F_{2}^{\lambda}$.
\item $\left(tF\right)^{\lambda}=t^{\lambda}F^{\lambda}$ for $t\ge0$.
\item $F^{\lambda}$ is continuous with respect to both $F$ and $\lambda$. 
\item $\left(F^{\mu}\right)^{\lambda}=F^{\lambda\mu}$ for $0<\lambda,\mu\le1$
and for $1\le\lambda,\mu\le\infty$. 
\item If $aB_{2}^{n}\subseteq F\subseteq\sqrt{2}aB_{2}^{n}$ for some $a>0$
then $\left(F^{\lambda}\right)^{1/\lambda}=F$ for all $0<\lambda\le1$. 
\end{enumerate}
\end{theorem}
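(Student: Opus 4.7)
The plan is to refine the naive Definition~\ref{def:naive-f} applied to $f(x)=x^{\lambda}$. The naive map $F \mapsto \bigcup_{\theta} B_{r_{K}(\theta)^{\lambda}\theta}$ (with $K=F^{-\flower}$) is automatically a flower by the third characterization, and Proposition~\ref{prop:func-inclusion} gives the one-sided inclusion $(F^{\mu})^{\lambda}\supseteq F^{\lambda\mu}$ because $x\mapsto x^{\lambda}$ is monotone; but \eqref{eq:composition-broken} warns that the inclusion is strict in general. Since $x\mapsto x^{\lambda}$ is convex for $\lambda\ge 1$ and concave for $\lambda\le 1$, one cannot expect (5) to close across $\lambda=1$. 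I would therefore define $F^{\lambda}$ by two separate recipes, one on $[1,\infty)$ and one on $(0,1]$, glued at $\lambda=1$ where they trivially agree.

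For $\lambda\ge 1$ I would correct the naive power downward by an Alexandrov construction: set $K^{\lambda}:=A[h_{K}^{\lambda}]$ (the largest convex body whose support function is dominated on $S^{n-1}$ by $h_{K}^{\lambda}$) and $F^{\lambda}:=(K^{\lambda})^{\flower}$. The function $h_{K}^{\lambda}$ is nonnegative and, as the composition of the convex $h_{K}$ with the convex increasing $x\mapsto x^{\lambda}$, is convex on $\RR^{n}$ — just not $1$-homogeneous — so $A[h_{K}^{\lambda}]$ extracts a canonical convex body from it. For $0<\lambda\le 1$ I would use the naive rule itself, $F^{\lambda}=\bigcup_{\theta}B_{r_{K}(\theta)^{\lambda}\theta}$. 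Both recipes reduce to $F$ at $\lambda=1$, and $F^{0}=B_{2}^{n}$ in both pictures (from $A[1]=B_{2}^{n}$ on one side and $\bigcup_{\theta}B_{\theta}=B_{2}^{n}$ on the other). Properties (2) and (3) are the monotonicity and homogeneity of each construction, and (4) follows from continuity of $h_{K}$ and $r_{K}$ on the sphere together with continuity of the Alexandrov body and of the convex hull in the Hausdorff metric.

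The semi-group property (5) is the heart of the matter. On $[1,\infty)$ the key lemma to establish is that $A[h_{A[g]}^{\lambda}]=A[g^{\lambda}]$ when $g=h_{K}^{\mu}$: iterating the Alexandrov correction loses no information, because the directions where $h_{A[g]}<g$ strictly are exactly the directions in which $h_{K}^{\mu}$ fails to be convex, and those directions do not contribute to the support function after another power. On $(0,1]$ the corresponding statement is that the naive rule is already canonical, i.e.\ $r_{K^{\mu}}(\theta)=r_{K}(\theta)^{\mu}$ on $S^{n-1}$, so that iteration of the naive rule gives $F^{\lambda\mu}$ exactly; this should be verified by a direct computation on the parameter set $\{r_{K}(\theta)^{\mu}\theta\}$ using concavity of $x\mapsto x^{\mu}$.

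Finally, (6) is the sharpest step. The assumption $aB_{2}^{n}\subseteq F\subseteq \sqrt{2}\,aB_{2}^{n}$ forces $r_{F}(\theta)/a\in[1,\sqrt{2}]$, and I would show that in this sandwich the $1$-homogeneous extension of $r_{F}^{\lambda}$ from the sphere to $\RR^{n}$ stays convex for every $\lambda\in(0,1]$, so that both $F^{\lambda}$ and $(F^{\lambda})^{1/\lambda}$ coincide with their naive counterparts and are mutually inverse. The threshold $\sqrt{2}$ should emerge from a Hessian estimate on the sphere in which the concave correction $\lambda(\lambda-1)r_{F}^{\lambda-2}|\nabla r_{F}|^{2}$ produced by differentiating the power is dominated by the convex contribution $r_{F}^{\lambda-1}\nabla^{2}r_{F}$; the bound $\sqrt{2}$ is the sharp threshold where the two balance. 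The principal obstacles I expect are the canonicalness verification in (5) on $(0,1]$ and the sharp Hessian estimate in (6); once these are in place, the remaining items reduce to bookkeeping.
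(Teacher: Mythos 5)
Your proposal is a genuinely different construction from the paper's, and the difference opens a gap at the heart of item (5). The paper never applies a single correction of the naive map $P_{\lambda}(K)=\conv S_{\lambda}(K)$, where $S_{\lambda}(K)$ is the star body with radial function $r_{K}^{\lambda}$. It instead fixes a partition $\Pi=\{\lambda=t_{0}<\cdots<t_{m}=1\}$, sets $s_{i}=t_{i-1}/t_{i}$, defines $P_{\Pi}(K)=(P_{s_{1}}\circ\cdots\circ P_{s_{m}})(K)$, and takes $K^{\lambda}$ to be the limit as $\|\Pi\|\to0$, a limit that exists because the one-sided inclusion $P_{\lambda}\circ P_{\mu}\supseteq P_{\lambda\mu}$ makes $P_{\Pi}$ monotone under refinement. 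The semigroup identity then becomes a bookkeeping fact about concatenating a partition of $[\lambda\mu,\mu]$ with one of $[\mu,1]$; no closed-form one-step formula is involved.

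Your single-step recipes cannot deliver (5). The key lemma you require on $(0,1]$ --- that $r_{K^{\mu}}=r_{K}^{\mu}$ on $S^{n-1}$ --- is exactly the statement that $S_{\mu}(K)$ is already convex, which is precisely what the $\sqrt{2}$-sandwich hypothesis of item (6) is there to enforce, and it is false in general: for an elongated body where $r_{K}$ oscillates by more than a factor $\sqrt{2}$, the star body $S_{\mu}(K)$ is genuinely non-convex for small $\mu$, so $\conv S_{\mu}(K)\supsetneq S_{\mu}(K)$ and a second application of the power then pushes the body strictly past $\conv S_{\lambda\mu}(K)$. Dually, your Alexandrov recipe for $\lambda>1$ amounts to $K^{\lambda}=A[h_{K}^{\lambda}]=\bigl(\conv S_{\lambda}(K^{\circ})\bigr)^{\circ}$, so the lemma $A[h_{A[g]}^{\lambda}]=A[g^{\lambda}]$ reduces to the same false claim applied to $K^{\circ}$. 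Properties (1)--(4) are fine in your picture, but (5) requires the limit over refining partitions, following \cite{Milman2017a,Milman2017}, and this is exactly what the paper does. Your sketch of item (6) --- a Hessian estimate on the sphere with $\sqrt{2}$ as the sharp threshold --- does match the paper's argument, but it is valid only under the sandwich condition and cannot be used to make the naive rule canonical for arbitrary $K$.
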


The construction of $F^{\lambda}$ and the proofs of properties (1)-(5)
are similar to the construction described in \cite{Milman2017a} and
\cite{Milman2017}. We sketch it here:
\begin{proof}
Let us write $P_{\lambda}(F)$ for the body $f(F)$, where $f:[0,\infty)\to[0,\infty)$
is the function $f(x)=x^{\lambda}$. From Proposition \ref{prop:func-inclusion}
we know that 
\begin{equation}
P_{\lambda}\left(P_{\mu}(F)\right)\supseteq P_{\lambda\mu}(F),\label{eq:prepower-ineq}
\end{equation}
but usually there will not be an equality. 

To define the proper power $F^{\lambda}$ we first define $F^{1}=F$
an $F^{0}=B_{2}^{n}$. Assume now that $0<\lambda<1$. Fix any partition
$\Pi=\left\{ t_{0},t_{1},t_{2},\ldots,t_{m}\right\} $ of the interval
$[\lambda,1]$, by which we mean we fix numbers $t_{0},t_{1},\ldots,t_{m}$
such that
\[
\lambda=t_{0}<t_{1}<\cdots<t_{m}=1
\]

We then define $s_{i}=t_{i-1}/t_{i}$ for $i=1,2,..m$ and write
\[
P_{\Pi}(F)=\left(P_{s_{1}}\circ P_{s_{2}}\circ\cdots\circ P_{s_{m}}\right)\left(F\right).
\]
The inclusion \eqref{eq:prepower-ineq} implies that if $\widetilde{\Pi}\supseteq\Pi$
then $P_{\widetilde{\Pi}}(F)\supseteq P_{\Pi}(F)$. We then define
\[
F^{\lambda}=\overline{\bigcup_{\Pi}P_{\lambda}(F)},
\]
where the union is taken over all partitions of $[0,1]$. 

Let us write $\left\Vert \Pi\right\Vert =\max_{i}\left|t_{i+1}-t_{i}\right|$
for the length of the largest interval in $\Pi$. The very useful
observation is that we actually have $\lim_{\left\Vert \Pi\right\Vert \to0}P_{\Pi}(F)=F^{\lambda}$
in the following sense: For every $\epsilon>0$ there exists $\delta>0$
such that for every partition $\Pi$ of $[\lambda,1]$ with $\left\Vert \Pi\right\Vert <\delta$
one has 
\[
(1-\epsilon)F^{\lambda}\subseteq P_{\Pi}(F)\subseteq F^{\lambda}.
\]
 The proof is almost identical to the one that appeared in \cite{Milman2017},
so we will not reproduce it here. However, we will quickly recall
why this construction has properties (1)-(5). 

The proof of properties (1), (2) and (3) are trivial. Indeed, (1)
is just a definition. For (2) and (3), $P_{\lambda}(F)$ satisfies
these properties by definition, hence $P_{\Pi}(F)$ satisfies them,
and by taking the limit $\left\Vert \Pi\right\Vert \to0$ we see they
are satisfied by $F^{\lambda}$. 

To show property (5), fix $0<\lambda,\mu\le1$. Fix a partition $\Pi_{\lambda}$
of $\left[\lambda,1\right]$ and a partition $\Pi_{\mu}$ of $\left[\mu,1\right]$.
If $\Pi_{\lambda}=\left\{ t_{0},t_{1},\ldots,t_{m}\right\} $ we define
$\mu\Pi_{\lambda}=\left\{ \mu t_{0},\mu t_{1},\ldots,\mu t_{m}\right\} $.
Note that this is a partition of $\left[\lambda\mu,\mu\right]$, so
$\Pi=\mu\Pi_{\lambda}\cup\Pi_{\mu}$ is a partition of $\left[\lambda\mu,1\right]$.
Immediately from the definition we have 
\[
P_{\Pi}(F)=P_{\mu\Pi_{\lambda}}\left(P_{\Pi_{\mu}}\left(F\right)\right)=P_{\Pi_{\lambda}}\left(P_{\Pi_{\mu}}(F)\right).
\]
 When $\left\Vert \Pi_{\lambda}\right\Vert \to0$ and $\left\Vert \Pi_{\mu}\right\Vert \to0$
we also have $\left\Vert \Pi\right\Vert \to0$, so we get $F^{\lambda\mu}=\left(F^{\mu}\right)^{\lambda}$
like we wanted.

Finally we prove property (4). Let $F=\bigcup_{\theta\in S^{n-1}}B_{r_{\theta}\theta}$
be a flower in its canonical representation. Fix a number $R>0$ such
that $\frac{1}{R}\le r_{\theta}\le R$ for all $\theta\in S^{n-1}$.
Then for every $0<\lambda,\mu<1$ we have
\[
P_{\lambda}(F)=\bigcup_{\theta\in S^{n-1}}B_{r_{\theta}^{\lambda}\theta}\subseteq\bigcup_{\theta\in S^{n-1}}B_{R^{\left|\mu-\lambda\right|}r_{\theta}^{\mu}\theta}=R^{\left|\mu-\lambda\right|}P_{\mu}(F).
\]
 It follows in the usual way that $F^{\lambda}\subseteq R^{\left|\mu-\lambda\right|}F^{\mu}$.
Note that the same remains if $\lambda$ are $\mu$ allowed to take
the values $0$ and $1$ (recall that we defined $F^{1}=F$ and $F^{0}=B_{2}^{n}$). 

Fix sequences $\left\{ F_{i}\right\} $, $\left\{ \lambda_{i}\right\} $
such that $F_{i}\to F$ and $\lambda_{i}\to\lambda$. Since $F_{i}\to F$
there exists a sequence $\left\{ \epsilon_{i}\right\} $ such that
$\epsilon_{i}\to0$ and $(1-\epsilon_{i})F\subseteq F_{i}\subseteq(1+\epsilon_{i})F$.
Hence 

\[
F_{i}^{\lambda_{i}}\subseteq(1+\epsilon_{i})^{\lambda_{i}}F^{\lambda_{i}}\subseteq(1+\epsilon_{i})^{\lambda_{i}}R^{\left|\lambda-\lambda_{i}\right|}F^{\lambda}\xrightarrow{i\to\infty}F^{\lambda},
\]
 and 
\[
F_{i}^{\lambda_{i}}\supseteq(1-\epsilon_{i})^{\lambda_{i}}F^{\lambda_{i}}\supseteq(1-\epsilon_{i})^{\lambda_{i}}R^{-\left|\lambda-\lambda_{i}\right|}F^{\lambda}\xrightarrow{i\to\infty}F^{\lambda},
\]
 so $F_{i}^{\lambda_{i}}\to F^{\lambda}$ and the power maps are continuous. 

So far we have only discussed the case $\lambda<1$, but the construction
in the case $\lambda>1$ is almost identical. This time we fix a partition
$\Pi=\left\{ t_{0},t_{1},t_{2},\ldots,t_{m}\right\} $ of $[1,\lambda]$,
set $s_{i}=t_{i}/t_{i-1}$ for $i=1,..m$ and define 
\[
P_{\Pi}(F)=\left(P_{s_{m}}\circ P_{s_{m-1}}\circ\cdots\circ P_{s_{1}}\right)\left(F\right).
\]
 Again one defines $F^{\lambda}=\overline{\bigcup_{\Pi}P_{\lambda}(F)}=\lim_{\left\Vert \Pi\right\Vert \to0}P_{\Pi}(F)$.
The proof of all properties is completely analogous. 
\end{proof}
To prove (6), note that this theorem is equivalent to a similar theorem
about power functions on the family of convex bodies. Since we assumed
our flowers are compact with the origin at their interior, we assume
the same for our convex bodies:
\begin{theorem}
\label{thm:power-bodies}On the class of compact convex bodies with
$0$ in their interior there are maps $K\mapsto K^{\lambda}$, $0\le\lambda<\infty$,
with the following properties:
\begin{enumerate}
\item $K^{1}=K$ and $K^{0}=B_{2}^{n}$. 
\item If $K_{1}\subseteq K_{2}$ then $K_{1}^{\lambda}\subseteq K_{2}^{\lambda}$.
\item $\left(tK\right)^{\lambda}=t^{\lambda}K^{\lambda}$ for $t\ge0$.
\item $K^{\lambda}$ is continuous with respect to both $K$ and $\lambda$.
\item $\left(K^{\lambda}\right)^{\mu}=K^{\lambda\mu}$ for $0\le\lambda,\mu\le1$
and for $1\le\lambda,\mu\le\infty$. 
\item If $aB_{2}^{n}\subseteq K\subseteq\sqrt{2}aB_{2}^{n}$ for some $a>0$
then $\left(K^{\lambda}\right)^{1/\lambda}=K$ for all $0<\lambda\le1$. 
\end{enumerate}
\end{theorem}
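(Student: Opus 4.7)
The plan is to deduce Theorem \ref{thm:power-bodies} from the preceding flower theorem by transporting along the bijection $\flower$. Concretely, I would set $K^{\lambda}:=\left(\left(K^{\flower}\right)^{\lambda}\right)^{-\flower}$, using the flower power already constructed. Since $\flower:\KK_{0}\to\FF$ is an order-preserving, continuous bijection that commutes with positive scaling and fixes $B_{2}^{n}$ (indeed $h_{B_{2}^{n}}\equiv1$ forces $(B_{2}^{n})^{\flower}=B_{2}^{n}$), properties (1)--(5) of the convex body version follow mechanically from their flower counterparts: for example $K^{0}=(B_{2}^{n})^{-\flower}=B_{2}^{n}$, and the semigroup relation $(K^{\lambda})^{\mu}=K^{\lambda\mu}$ in the appropriate ranges holds because $\flower$ and $-\flower$ cancel in the middle of the composition. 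The only effort at this stage is routine bookkeeping.

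The substantive point is property (6), which I would prove for convex bodies directly (it then transports back to give (6) for flowers). The hypothesis $aB_{2}^{n}\subseteq K\subseteq\sqrt{2}aB_{2}^{n}$ forces both $r_{K}$ and $h_{K}$ into $[a,\sqrt{2}a]$, so for $0<\lambda\le1$ one has $r_{K}^{\lambda}\in[a^{\lambda},(\sqrt{2}a)^{\lambda}]\subseteq[a^{\lambda},\sqrt{2}a^{\lambda}]$; the $\sqrt{2}$-regime is preserved under $\lambda$-th powers. My plan is then to argue that the naive operation $P_{\lambda}$ from Proposition \ref{prop:f-convex-formula} produces a star body $S_{\lambda}$ (with radial function $r_{K}^{\lambda}$) that is already convex, so no convex hull is needed and $P_{\lambda}(K)=S_{\lambda}$; iterating this observation along any partition of $[\lambda,1]$ then yields $K^{\lambda}=S_{\lambda}$. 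Applying $P_{1/\lambda}$ to $S_{\lambda}$ next produces the star body with radial function $(r_{K}^{\lambda})^{1/\lambda}=r_{K}$, which is already the convex body $K$; and the same iteration argument, noting that intermediate exponents $\lambda c$ for $c\in[1,1/\lambda]$ remain in $[\lambda,1]$ and so keep us within the $\sqrt{2}$-regime, delivers $(K^{\lambda})^{1/\lambda}=P_{1/\lambda}(K^{\lambda})=K$.

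The hard part will be the convexity lemma underlying this reduction, namely that \emph{if $aB_{2}^{n}\subseteq K\subseteq\sqrt{2}aB_{2}^{n}$ and $0<\lambda\le1$, then the star body with radial function $r_{K}^{\lambda}$ is convex}. Translated through Minkowski functionals this asks that $|x|^{1-\lambda}\|x\|_{K}^{\lambda}$ be convex on $\RR^{n}$ whenever $|x|/\sqrt{2}\le\|x\|_{K}\le|x|$. The geometric mean of two arbitrary norms is not a norm in general, so the hypothesis that $K$ is $\sqrt{2}$-close to the Euclidean ball is doing essential work, and this is where the specific constant $\sqrt{2}$ earns its place (in the spirit of \cite{Milman2017a, Milman2017}). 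I would attack the lemma either by a direct positive-semidefiniteness check of the Hessian along lines that avoid the origin, or by a level-set argument tracking convexity of $\{|x|^{1-\lambda}\|x\|_{K}^{\lambda}\le 1\}$ as $\lambda$ moves from $0$ to $1$. This is the point where the genuine geometric obstruction must be overcome.
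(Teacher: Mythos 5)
Your overall strategy is exactly the paper's: define $K^{\lambda}=\left(\left(K^{\flower}\right)^{\lambda}\right)^{-\flower}$, observe that (1)--(5) transport mechanically from the flower version, and reduce (6) to the single convexity lemma that for $aB_{2}^{n}\subseteq K\subseteq\sqrt{2}aB_{2}^{n}$ and $0<\lambda\le1$, the star body $S_{\lambda}(K)$ with radial function $r_{K}^{\lambda}$ is convex. You also correctly identify the reasons the iteration then closes: $P_{\mu}(K)=S_{\mu}(K)$ for each intermediate exponent, $S$ is a genuine semigroup in $\mu$, and every intermediate exponent $\lambda c$ with $c\in[1,1/\lambda]$ stays in $(0,1]$ so the $\sqrt{2}$-hypothesis is preserved (since $(\sqrt{2})^{\mu}\le\sqrt{2}$). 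All of that matches the paper.

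The genuine gap is that the convexity lemma itself --- which you rightly call ``the hard part'' and which is where the constant $\sqrt{2}$ is actually used --- is not proved; you only name two possible lines of attack. Stating that you would ``attack the lemma either by a direct positive-semidefiniteness check of the Hessian \ldots or by a level-set argument'' is a plan, not a proof, and without it property (6) is unestablished. For comparison, the paper closes this gap as follows. Since $S_{\lambda}$ commutes with taking sections by subspaces, it suffices to treat $n=2$ (after a $C^{2}$-smoothing). Writing $g_{K}(t)=G_{K}(\cos t,\sin t)$ for the gauge, convexity of a planar body is the scalar inequality $g_{K}^{\prime\prime}+g_{K}\ge0$, and a direct computation gives the identity
\[
\frac{\left(g_{K}^{\lambda}\right)^{\prime\prime}}{g_{K}^{\lambda}}+1=\lambda\left(\frac{g_{K}^{\prime\prime}}{g_{K}}+1\right)+(1-\lambda)\left(1-\lambda\left(\frac{g_{K}^{\prime}}{g_{K}}\right)^{2}\right).
\]
The first summand is nonnegative by convexity of $K$, and the second is controlled by the geometric estimate $\left(g_{K}^{\prime}/g_{K}\right)^{2}\le1$, which in turn follows from the normal to $\partial K$ at $r_{K}(\theta)\theta$ making angle at most $\pi/4$ with $\theta$ --- and this is precisely what $B_{2}^{n}\subseteq K\subseteq\sqrt{2}B_{2}^{n}$ guarantees. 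So your Hessian-style instinct is the right one, but the reduction to a one-variable inequality for $g_{K}$ and the $\left|g_{K}^{\prime}\right|\le g_{K}$ bound are the concrete content you would still need to supply.
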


We will prove property (6) in the language of convex bodies. The following
proposition will be needed:
\begin{proposition}
For $K\in\KK_{0}$ and $\lambda>0$ let $S_{\lambda}(K)$ be the star
body with radial function $r_{S_{\lambda}(K)}=r_{K}^{\lambda}$. If
$aB_{2}^{n}\subseteq K\subseteq\sqrt{2}aB_{2}^{n}$ for some $a>0$
then $S_{\lambda}(K)$ is convex for all $0<\lambda\le1$. 
\end{proposition}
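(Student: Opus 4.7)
The approach goes through two reductions followed by a planar computation, and the sandwich $aB_2^n\subseteq K\subseteq\sqrt 2 aB_2^n$ is exactly what makes the computation succeed. First, for any two-dimensional linear subspace $E\subseteq\RR^n$ one easily checks $S_\lambda(K)\cap E=S_\lambda(K\cap E)$ directly from the definition of the radial function, and the planar body $K\cap E$ satisfies the analogous sandwich condition inside $E$. Since a star body with $0$ in its interior is convex iff every planar section through the origin is convex, it suffices to prove the statement when $n=2$.

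In the planar case, after rescaling to $a=1$, I would first approximate $K$ by smooth convex bodies $K_\varepsilon=(1-\varepsilon)K+\varepsilon B_2^2$: this Minkowski average preserves the sandwich, smooths the boundary, and the convexity of $S_\lambda(K)$ follows from convexity of the approximants by Hausdorff continuity of the map $K\mapsto S_\lambda(K)$. The classical polar-curvature criterion then says a smooth star body with radial function $r$ is convex iff $r^2+2(r')^2-rr''\ge 0$. Writing $u=\log r_K$, this inequality applied to $K$ itself reads $1+(u')^2\ge u''$, while applied to $r=e^{\lambda u}$ (the radial function of $S_\lambda(K)$) it becomes $1+\lambda^2(u')^2\ge\lambda u''$. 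A short rearrangement shows the latter is implied by the former provided $(u')^2\le 1/\lambda$; since $\lambda\le 1$, it therefore suffices to establish the cleaner bound $|r_K'|\le r_K$.

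The key geometric input, and the place where the constant $\sqrt 2$ finally enters, is this last estimate. At a smooth boundary point $x=r_K(\theta)\theta$ with outer unit normal $n$, one has $\cos\alpha=\langle x,n\rangle/|x|=h_K(n)/|x|$, where $\alpha$ is the angle between $x$ and $n$. The sandwich forces both $h_K(n)$ and $|x|$ to lie in $[1,\sqrt 2]$, so $\cos\alpha\ge 1/\sqrt 2$ and hence $\alpha\le\pi/4$. A standard polar-coordinate computation identifies $\tan\alpha$ with $|r_K'|/r_K$, whence $|r_K'|\le r_K$ as required. The main obstacle is really just to recognize that the hypothesis is tuned precisely so that $\alpha\le\pi/4$; once this is seen, the smoothing and the polar-coordinate calculus are routine.
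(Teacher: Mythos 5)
Your proof is correct and follows essentially the same route as the paper's: reduce to $n=2$ via planar sections, smooth, apply the polar curvature criterion, and observe that the sandwich $B_2^2\subseteq K\subseteq\sqrt{2}B_2^2$ forces $\langle \theta, n(\theta)\rangle\ge 1/\sqrt{2}$, hence $|r_K'|\le r_K$; the paper just phrases the differential inequality in terms of the gauge function $g_K=1/r_K$ (so the criterion reads $g_K''+g_K\ge0$) rather than your $u=\log r_K$, which is a purely cosmetic reparametrization of the same estimate. One small technical slip: the Minkowski average $(1-\varepsilon)K+\varepsilon B_2^2$ only gives a $C^{1,1}$ boundary, not the $C^2$ regularity with positive curvature that the polar curvature criterion needs; you should instead invoke the standard approximation of a planar convex body by smooth strictly convex bodies (e.g.\ by mollifying the support function, as in Bonnesen--Fenchel), which still preserves the sandwich after an arbitrarily small dilation.
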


\begin{proof}
Note that for every subspace $E\subseteq\RR^{n}$ we have $S_{\lambda}\left(K\cap E\right)=S_{\lambda}K\cap E$.
It is therefore enough to prove the Proposition in dimension $n=2$.
By dilating $K$ we may assume that $a=1$. By standard approximation
we may also assume that $K$ is a $C^{2}$ convex body with positive
curvature at every boundary point (see, e.g. Section 27 of \cite{Bonnesen1987}). 

Let $G_{K}:\RR^{2}\to\RR$ be the gauge function of $K$, i.e. $G$
is $1$-homogeneous and $G_{K}(\theta)=\frac{1}{r_{K}(\theta)}$ for
all $\theta\in S^{1}$. Define $g_{K}:\RR\to\RR$ by $g_{K}(t)=G_{K}\left(\cos t,\sin t\right)$.
Then the convexity of $K$ is equivalent to $G$ being a convex function,
i.e. $\nabla^{2}G_{K}\succeq0$. This is equivalent to the requirement
that $g_{K}^{\prime\prime}+g_{K}\ge0$.

Obviously $g_{S_{\lambda}(K)}=g_{K}^{\lambda}$. Differentiating we
have the formula 
\begin{align*}
\frac{\left(g_{K}^{\lambda}\right)^{\prime\prime}}{g_{K}^{\lambda}}+1 & =\lambda\left(\frac{g_{K}^{\prime\prime}}{g_{K}}+1\right)+(1-\lambda)\left(1-\lambda\left(\frac{g_{K}^{\prime}}{g_{K}}\right)^{2}\right)\\
 & \ge(1-\lambda)\left(1-\lambda\left(\frac{g_{K}^{\prime}}{g_{K}}\right)^{2}\right).
\end{align*}

To bound this expression, fix a point $\theta=\left(\cos t,\sin t\right)$
and let $n(\theta)$ be the unit normal to $K$ at the point $p=r_{K}(\theta)\theta$.
Then the half space
\[
H=\left\{ x\in\RR^{2}:\ \left\langle x,n(\theta)\right\rangle \le\left\langle p,n(\theta)\right\rangle \right\} 
\]
must satisfy $H\supseteq K\supseteq B_{2}^{n}$. Hence $n(\theta)\in H$,
so 
\[
\left\langle \theta,n(\theta)\right\rangle =\frac{1}{r_{K}(\theta)}\left\langle p,n(\theta)\right\rangle \ge\frac{1}{\sqrt{2}}\cdot\left\langle n(\theta),n(\theta)\right\rangle =\frac{1}{\sqrt{2}}.
\]
 Since $n(\theta)=\frac{\nabla G_{K}(\theta)}{\left|\nabla G_{K}(\theta)\right|}$
is follows that 
\[
\left|\nabla G_{K}(\theta)\right|^{2}=\frac{\left\langle \nabla G_{K}(\theta),\theta\right\rangle ^{2}}{\left\langle n(\theta),\theta\right\rangle ^{2}}\le\frac{G_{K}(\theta)^{2}}{1/2}=2G_{K}(\theta)^{2},
\]
 where we used the fact that $G_{K}$ is 1-homogeneous so $\left\langle \nabla G_{K}(\theta),\theta\right\rangle =G_{K}(\theta)$.
If we now write $\eta=(-\sin t,\cos t)$ then $\left\{ \theta,\eta\right\} $
is an orthonormal basis of $\RR^{2}$, so 
\[
g_{K}^{\prime}(t)^{2}=\left\langle \nabla G_{K}(\theta),\eta\right\rangle ^{2}=\left|\nabla G_{K}(\theta)\right|^{2}-\left\langle \nabla G_{K}(\theta),\theta\right\rangle ^{2}\le2G_{K}(\theta)^{2}-G_{K}(\theta)^{2}=g_{K}(t)^{2}.
\]
 Hence for every $0<\lambda\le1$ we have 
\[
\frac{\left(g_{K}^{\lambda}\right)^{\prime\prime}}{g_{K}^{\lambda}}+1\ge(1-\lambda)\left(1-\lambda\left(\frac{g_{K}^{\prime}}{g_{K}}\right)^{2}\right)\ge(1-\lambda)\left(1-\lambda\right)\ge0,
\]
 so $S_{\lambda}(K)$ is indeed convex. 
\end{proof}
And now we can prove:
\begin{proof}[Proof of Theorem \ref{thm:power-bodies} (part 6).]
First we claim that for every $0<\lambda\le1$ we have $K^{\lambda}=S_{\lambda}(K)$.
To see this, let us translate the definition of $K^{\lambda}$ from
the language of flowers to the language of convex bodies. Define $P_{\lambda}(K)=\conv S_{\lambda}(K)$.
By Proposition \ref{prop:f-convex-formula} we have $P_{\lambda}(K)=P_{\lambda}(K^{\flower})^{-\flower}$.
For every partition 
\[
\Pi:\ \lambda=t_{0}<t_{1}<\cdots<t_{m}=1
\]
 of the interval $[\lambda,1]$ we set $s_{i}=t_{i-1}/t_{i}$ for
$i=1,2,..m$ and define
\[
P_{\Pi}(K)=\left(P_{s_{1}}\circ P_{s_{2}}\circ\cdots\circ P_{s_{m}}\right)\left(K\right),
\]
 and then $K^{\lambda}=\lim_{\left\Vert \Pi\right\Vert \to0}P_{\Pi}(K)$. 

However, in our case $S_{\mu}(K)$ is convex for all $0<\mu\le1$,
so $P_{\mu}(K)=S_{\mu}(K)$. Since $S_{\mu_{1}\mu_{2}}=S_{\mu_{1}}\circ S_{\mu_{2}}$
for every $\mu_{1},\mu_{2}>0$ we see immediately that $P_{\Pi}(K)=S_{\lambda}(K)$
for every partition $\Pi$ of $\left[\lambda,1\right]$. Hence $K^{\lambda}=S_{\lambda}(K)$
as well.

Now the definition of $K^{\mu}$ for $\mu>1$ is essentially the same:
we take a partition 
\[
\Pi:\ 1=t_{0}<t_{1}<\cdots<t_{m}=\mu
\]
 of $[1,\mu]$, set $s_{i}=t_{i}/t_{i-1}$ and define 
\[
P_{\Pi}(K)=\left(P_{s_{m}}\circ\cdots\circ P_{s_{2}}\circ P_{s_{1}}\right)(K).
\]
 In particular we see that for every $\mu\le\frac{1}{\lambda}$ and
every partition $\Pi$ of $\left[1,\mu\right]$ we have 
\begin{align*}
P_{\Pi}\left(K^{\lambda}\right) &= \left(P_{s_{m}}\circ\cdots\circ P_{s_{2}}\circ P_{s_{1}}\right)(S_{\lambda}(K)) \\
&=\left(S_{s_{m}}\circ\cdots\circ S_{s_{2}}\circ S_{s_{1}}\right)(S_{\lambda}(K))=S_{\lambda\mu}(K)=K^{\lambda\mu}.
\end{align*}
 It follows that $\left(K^{\lambda}\right)^{\mu}=K^{\lambda\mu}$.
In particular for $\mu=\frac{1}{\lambda}$ we have $\left(K^{\lambda}\right)^{1/\lambda}=K$. 
\end{proof}
We should note that the two cases $\lambda<1$ and $\lambda>1$ are
dramatically different. For example, let $K=\left[-\frac{1}{\sqrt{n}},\frac{1}{\sqrt{n}}\right]^{n}$
be the cube with $2^{n}$ vertices on the unit sphere. Then for all
$\lambda>1$ we have $K^{\lambda}=K$, while for any $0<\lambda_{1}<\lambda_{2}<1$
we have $K^{\lambda_{1}}\supsetneq K^{\lambda_{2}}$. To see this
we prove the following:
\begin{proposition}
\label{prop:power-compare}For every $0<\lambda\le1$ and every $K\in\KK_{0}$
we have $r_{K^{\lambda}}\ge r_{K}^{\lambda}$ and $h_{K^{\lambda}}\le h_{K}^{\lambda}$.
\end{proposition}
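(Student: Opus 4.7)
The plan is to reduce both inequalities to a single-step version for $P_{\mu}$ and then iterate along a partition, taking the limit that defines $K^{\lambda}$.

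First I would establish the single-step claim: for every $L\in\KK_{0}$ and every $0<\mu\le 1$,
\[
r_{P_{\mu}(L)}\ge r_{L}^{\mu}\qquad\text{and}\qquad h_{P_{\mu}(L)}\le h_{L}^{\mu}.
\]
The radial inequality is essentially free, because $P_{\mu}(L)=\conv S_{\mu}(L)\supseteq S_{\mu}(L)$ and $S_{\mu}(L)$ has radial function $r_{L}^{\mu}$. For the support function inequality, I would use the identity $h_{\conv S}=h_{S}$ for any star body $S$, which gives
\[
h_{P_{\mu}(L)}(\theta)=\sup_{\eta\in S^{n-1}}r_{L}(\eta)^{\mu}\langle\eta,\theta\rangle_{+}.
\]
The key observation is the factorization
\[
r_{L}(\eta)^{\mu}\langle\eta,\theta\rangle=\bigl(r_{L}(\eta)\langle\eta,\theta\rangle\bigr)^{\mu}\cdot\langle\eta,\theta\rangle^{1-\mu},
\]
valid when $\langle\eta,\theta\rangle\ge 0$. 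Since $\langle\eta,\theta\rangle\le 1$ on the sphere and $1-\mu\ge 0$, the second factor is at most $1$; and since $r_{L}(\eta)\eta\in L$, the first factor is at most $h_{L}(\theta)^{\mu}$. Taking sup over $\eta$ yields the claim. The only slightly delicate point is the step $h_{\conv S_{\mu}(L)}=h_{S_{\mu}(L)}$, but this is a standard fact and follows once we write $S_\mu(L)$ as the union over $\eta\in S^{n-1}$ of the segments $[0,r_L(\eta)^\mu\eta]$.

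Next I would iterate along a partition $\Pi\colon\lambda=t_{0}<t_{1}<\cdots<t_{m}=1$ of $[\lambda,1]$, with $s_{i}=t_{i-1}/t_{i}\in(0,1)$ and $\prod_{i=1}^{m}s_{i}=\lambda$. Set $L_{0}=K$ and $L_{j}=P_{s_{m-j+1}}(L_{j-1})$, so that $L_{m}=P_{\Pi}(K)$. Applying the single-step inequality at each stage and using that $x\mapsto x^{s}$ is monotone on $[0,\infty)$ for $s>0$, a straightforward induction shows
\[
r_{P_{\Pi}(K)}\ge r_{K}^{s_{1}s_{2}\cdots s_{m}}=r_{K}^{\lambda},\qquad h_{P_{\Pi}(K)}\le h_{K}^{s_{1}s_{2}\cdots s_{m}}=h_{K}^{\lambda}.
\]

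Finally, I would let $\|\Pi\|\to 0$. By the construction of $K^{\lambda}$ recalled in the proof of the previous theorem, $P_{\Pi}(K)$ converges to $K^{\lambda}$ in the Hausdorff metric, so both $r_{P_{\Pi}(K)}\to r_{K^{\lambda}}$ and $h_{P_{\Pi}(K)}\to h_{K^{\lambda}}$ pointwise on $S^{n-1}$, yielding the stated inequalities. The main obstacle is really the single-step support-function bound; once that factorization is in place, the rest is a routine telescoping argument and limit.
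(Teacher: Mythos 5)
Your proof is correct and follows essentially the same route as the paper: the radial bound is immediate from $P_\mu(L)\supseteq S_\mu(L)$, the support bound reduces to the single-step estimate $h_{S_\mu(L)}\le h_L^\mu$ (your factorization $r_L(\eta)^\mu\langle\eta,\theta\rangle=(r_L(\eta)\langle\eta,\theta\rangle)^\mu\langle\eta,\theta\rangle^{1-\mu}$ is exactly the paper's chain of inequalities, just written more explicitly), and both are then telescoped along a partition and passed to the limit $\|\Pi\|\to 0$.
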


\begin{proof}
The inequality $r_{K^{\lambda}}\ge r_{K}^{\lambda}$ is trivial and
in fact holds for every $\lambda>0$: by definition we have 
\[
r_{P_{\lambda}(K)}\ge r_{S_{\lambda}(K)}=r_{K}^{\lambda},
\]
 hence $r_{P_{\Pi}(K)}\ge r_{K}^{\lambda}$ for every partition $\Pi$
of $[\lambda,1]$ (or of $[1,\lambda]$ in the case $\lambda>1$),
so the result follows.

For the other inequality it is again enough to show that $h_{S_{\lambda}(K)}\le h_{K}^{\lambda}$.
Note that $S_{\lambda}(K)$ is not necessarily convex, but we can
define its support function in the usual way as $h_{S_{\lambda}(K)}(\theta)=\sup_{x\in S_{\lambda}(K)}\left\langle x,\theta\right\rangle $.
Indeed, fix a point $x\in S_{\lambda}(K)$ and write $x=\rho\cdot\eta$
for some $\rho>0$ and $\eta\in S^{n-1}$. Then 
\[
\rho\le r_{S_{\lambda}(K)}(\eta)=r_{K}(\eta)^{\lambda}.
\]
 If $\left\langle \eta,\theta\right\rangle <0$ then obviously $\left\langle x,\theta\right\rangle \le0\le h_{K}(\theta)^{\lambda}$.
If on the other hand $\left\langle \eta,\theta\right\rangle \ge0$
then 
\[
\left\langle x,\theta\right\rangle =\rho\cdot\left\langle \eta,\theta\right\rangle \le r_{K}(\eta)^{\lambda}\left\langle \eta,\theta\right\rangle \le\left\langle r_{K}(\eta)\eta,\theta\right\rangle ^{\lambda}\le h_{K}(\theta)^{\lambda},
\]
 where the last inequality holds since $r_{K}(\eta)\eta\in K$. We
conclude that indeed $h_{S_{\lambda}(K)}\le h_{K}^{\lambda}$.

Now we finish the proof like in the case of the radial functions:
we have $h_{P_{\lambda}(K)}=h_{S_{\lambda}(K)}\le h_{K}^{\lambda}$,
hence $h_{P_{\Pi}(K)}\le h_{K}^{\lambda}$ for every partition $\Pi$
of $[\lambda,1]$ , hence $h_{K^{\lambda}}\le h_{K}^{\lambda}$.
\end{proof}
We therefore have:
\begin{proposition}
If $K^{\lambda_{1}}=K^{\lambda_{2}}$ for $\lambda_{1},\lambda_{2}\le1$,
$\lambda_{1}\ne\lambda_{2}$, then $K=B_{2}^{n}$. 
\end{proposition}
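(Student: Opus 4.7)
The plan is to reduce the hypothesis to the single equality $K^{\lambda_1}=B_2^n$ via iteration of the semigroup property, and then pinch $K$ between two copies of $B_2^n$ using the radial/support-function comparisons from Proposition \ref{prop:power-compare}.

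First I would assume without loss of generality that $\lambda_1<\lambda_2\le 1$, and set $\alpha=\lambda_1/\lambda_2\in(0,1)$. Applying the $\alpha$-power operation to both sides of $K^{\lambda_1}=K^{\lambda_2}$ and invoking property (5) of Theorem \ref{thm:power-bodies} (which applies since all exponents involved lie in $[0,1]$), I get
\[
K^{\lambda_1\alpha}=\left(K^{\lambda_1}\right)^{\alpha}=\left(K^{\lambda_2}\right)^{\alpha}=K^{\lambda_2\alpha}=K^{\lambda_1}.
\]
Iterating this argument yields $K^{\lambda_1\alpha^k}=K^{\lambda_1}$ for every $k\ge 0$. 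Since $\alpha<1$ we have $\lambda_1\alpha^k\to 0$, and continuity in the exponent (property (4)) combined with $K^0=B_2^n$ (property (1)) gives $K^{\lambda_1}=B_2^n$.

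The second step uses Proposition \ref{prop:power-compare}: from $r_{K^{\lambda_1}}\ge r_K^{\lambda_1}$ and $r_{K^{\lambda_1}}=r_{B_2^n}\equiv 1$ I obtain $r_K\le 1$, i.e.\ $K\subseteq B_2^n$. From $h_{K^{\lambda_1}}\le h_K^{\lambda_1}$ and $h_{K^{\lambda_1}}\equiv 1$ I obtain $h_K\ge 1$, i.e.\ $B_2^n\subseteq K$. Combining these gives $K=B_2^n$.

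Overall there is no real obstacle: the only slightly subtle point is checking that every application of property (5) is legal, namely that the exponents $\lambda_1$, $\lambda_2$, $\alpha$ and all intermediate products $\lambda_1\alpha^k$ stay in $[0,1]$, which they do since $\lambda_1\le 1$ and $\alpha<1$. Continuity in the exponent is crucial for the limit step and is ensured by property (4).
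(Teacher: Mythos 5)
Your proof is correct, but it takes a genuinely different route from the paper. You first reduce the hypothesis to the single statement $K^{\lambda_1}=B_2^n$ by iterating the semigroup property (5) of Theorem~\ref{thm:power-bodies} and passing to the limit $\lambda_1\alpha^k\to 0$ via continuity (4) and the normalization $K^0=B_2^n$ (1), and then you pinch $K$ between two copies of $B_2^n$ using both inequalities of Proposition~\ref{prop:power-compare}. The paper instead avoids the semigroup property and the limit argument entirely: it picks the two extremal directions $\theta_1,\theta_2$ where $r_K$ attains its maximum and minimum, observes that there $h_K(\theta_i)=r_K(\theta_i)$, and then sandwiches $h_{K^\lambda}(\theta_i)\le h_K(\theta_i)^\lambda = r_K(\theta_i)^\lambda\le r_{K^\lambda}(\theta_i)\le h_{K^\lambda}(\theta_i)$ to deduce the exact pointwise identity $r_{K^\lambda}(\theta_i)=r_K(\theta_i)^\lambda$; the hypothesis $K^{\lambda_1}=K^{\lambda_2}$ then forces $r_K(\theta_i)=1$, and since these are the extreme values, $r_K\equiv 1$. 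Your route is structurally clean but leans on more of the developed machinery (semigroup property, continuity in the exponent, the value at $0$), whereas the paper's argument is more elementary, using only Proposition~\ref{prop:power-compare} plus the classical fact that at a radial extremum the support function equals the radial function. One tiny gap in your writeup: you should handle $\lambda_1=0$ separately, since Proposition~\ref{prop:power-compare} requires $\lambda>0$ for the pinching step (though in that case $K^{\lambda_2}=B_2^n$ holds directly from the hypothesis and you can pinch with $\lambda_2$ instead).
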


\begin{proof}
Fix directions $\theta_{1},\theta_{2}\in S^{n-1}$ such that $\theta_{1}=\max_{\theta\in S^{n-1}}r_{K}(\theta)$
and $\theta_{2}=\min_{\theta\in S^{n-1}}r_{K}(\theta)$. The supporting
hyperplane in direction $\theta_{i}$ must be orthogonal to $\theta_{i}$,
so $h_{K}(\theta_{i})=r_{K}(\theta_{i})$. But then for every $0<\lambda\le1$
we have
\[
h_{K^{\lambda}}(\theta_{i})\le h_{K}(\theta_{i})^{\lambda}=r_{K}(\theta_{i})^{\lambda}\le r_{K^{\lambda}}(\theta_{i})\le h_{K^{\lambda}}(\theta_{i}),
\]
 so we must have $r_{K^{\lambda}}(\theta_{i})=r_{K}(\theta_{i})^{\lambda}$.

Since we assumed that $K^{\lambda_{1}}=K^{\lambda_{2}}$ we have $r_{K}(\theta_{i})^{\lambda_{1}}=r_{K}(\theta_{i})^{\lambda_{2}}$,
which can only happen if $r_{K}(\theta_{1})=r_{K}(\theta_{2})=1$.
By definition of $\theta_{1}$ and $\theta_{2}$ we have $r_{K}(\theta)=1$
for all $\theta\in S^{n-1}$, so $K=B_{2}^{n}$. 
\end{proof}
Let us now discuss the volume of the bodies $K^{\lambda}$. For $0<\lambda<1$,
they satisfy the same inequality as Saroglou's Theorem \ref{thm:dual-BMI}:
\begin{theorem}
For every convex body $K$ and $0<\lambda<1$ we have $\left|K^{\lambda}\right|\le\left|B_{2}^{n}\right|^{1-\lambda}\left|K\right|^{\lambda}$.
\end{theorem}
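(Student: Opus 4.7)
The plan is to establish the inequality first for every one of the approximants $P_\Pi(K)$ used in the construction of $K^\lambda$, uniformly in the partition $\Pi$, and then to pass to the limit. The key input is that Saroglou's Theorem \ref{thm:dual-BMI} gives exactly the single-step version of what we want: recalling from Proposition \ref{prop:f-convex-formula} that $P_\mu(K)$ is precisely $f(K)$ for $f(x)=x^\mu$, the theorem reads $|P_\mu(K)| \le |B_2^n|^{1-\mu}|K|^{\mu}$ for all $0<\mu\le 1$ and every convex body $K$ containing $0$.

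Now fix a partition $\Pi:\ \lambda = t_0 < t_1 < \cdots < t_m = 1$ of $[\lambda,1]$, set $s_i = t_{i-1}/t_i \in (0,1]$, and let $L_{m+1} = K$ and $L_j = P_{s_j}(L_{j+1})$ for $j = m, m-1, \ldots, 1$, so that $L_1 = P_\Pi(K)$. Each $L_{j+1}$ is a compact convex body containing $0$ in its interior, so applying Saroglou's bound at every step yields $|L_j| \le |B_2^n|^{1-s_j}|L_{j+1}|^{s_j}$. Iterating gives $|P_\Pi(K)| \le |B_2^n|^{\alpha}|K|^{\beta}$ with $\beta = s_1 s_2 \cdots s_m$ and $\alpha = \sum_{k=1}^{m} (s_1 s_2 \cdots s_{k-1})(1-s_k)$. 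Both quantities telescope: using $s_i = t_{i-1}/t_i$ one has $\beta = t_0/t_m = \lambda$, and $\alpha = \sum_{k=1}^m \bigl(s_1 \cdots s_{k-1} - s_1 \cdots s_k\bigr) = 1 - \beta = 1 - \lambda$. Therefore
\[
|P_\Pi(K)| \le |B_2^n|^{1-\lambda}|K|^{\lambda},
\]
with a bound that is completely independent of the partition.

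To finish, one observes that $P_\Pi(K) \to K^\lambda$ in the Hausdorff metric as $\|\Pi\| \to 0$, which was established in the proof of Theorem \ref{thm:power-bodies} (transported from the flower picture via the core map, which is order-preserving). Since Lebesgue measure is continuous with respect to Hausdorff convergence of convex bodies, the uniform bound passes to the limit and yields $|K^\lambda| \le |B_2^n|^{1-\lambda}|K|^{\lambda}$. I do not anticipate any genuine obstacle in carrying this out: Saroglou's single-step bound already carries the correct constant, and the multiplicative compatibility $\lambda = s_1 s_2 \cdots s_m$ makes the iteration telescope exactly, so no new analytic input beyond Theorem \ref{thm:dual-BMI} and the continuity of volume is required.
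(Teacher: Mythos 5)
Your proposal is correct and follows essentially the same route as the paper: apply Saroglou's single-step bound (Theorem \ref{thm:dual-BMI}) at each stage of the partition, verify that the exponents telescope to $1-\lambda$ and $\lambda$, and pass to the limit as $\left\Vert \Pi\right\Vert \to 0$ using continuity of volume. The only cosmetic difference is that the paper demonstrates the telescoping by showing $\left|P_{\lambda}\left(P_{\mu}(K)\right)\right|\le\left|B_{2}^{n}\right|^{1-\lambda\mu}\left|K\right|^{\lambda\mu}$ for two steps and then iterating implicitly, whereas you write out the general product and sum explicitly.
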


Note that Sargolou's result can be stated as $\left|P_{\lambda}(K)\right|\le\left|B_{2}^{n}\right|^{1-\lambda}\left|K\right|^{\lambda}$,
where $P_{\lambda}(K)$ was the naive application of $f(x)=x^{\lambda}$
to the body $K$. Since $P_{\lambda}(K)\subseteq K^{\lambda}$, this
Theorem is formally stronger than Theorem \ref{thm:dual-BMI}. However,
they are actually equivalent:
\begin{proof}
Fix $0<\lambda,\mu<1$. Using Theorem \ref{thm:dual-BMI} twice we
see that
\[
\left|P_{\lambda}\left(P_{\mu}(K)\right)\right|\le\left|B_{2}^{n}\right|^{1-\lambda}\left|P_{\mu}(K)\right|^{\lambda}\le\left|B_{2}^{n}\right|^{1-\lambda}\left(\left|B_{2}^{n}\right|^{1-\mu}\left|K\right|^{\mu}\right)^{\lambda}=\left|B_{2}^{n}\right|^{1-\lambda\mu}\left|K\right|^{\lambda\mu}.
\]
 Iterating, we see that if $\Pi$ is any partition of $\left[\lambda,1\right]$
then $\left|P_{\Pi}(K)\right|\le\left|B_{2}^{n}\right|^{1-\lambda}\left|K\right|^{\lambda}$.
Taking the limit as $\left\Vert \Pi\right\Vert \to0$ the result follows.
\end{proof}
For $\lambda>1$ the reverse inequality is true, as is easy to prove
directly:
\begin{proposition}
For every convex body $K$ and $\lambda>1$ we have $\left|K^{\lambda}\right|\ge\left|B_{2}^{n}\right|^{1-\lambda}\left|K\right|^{\lambda}$.
\end{proposition}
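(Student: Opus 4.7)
My plan is to bound $|K^{\lambda}|$ from below by the volume of the (non-convex) star body obtained by raising $r_K$ to the power $\lambda$, and then apply Jensen's inequality. The direction $\lambda>1$ is precisely what makes $t\mapsto t^{\lambda}$ convex, so Jensen works the right way.

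First, I use the partition construction of $K^{\lambda}$ from the proof of Theorem \ref{thm:power-bodies}: since $K^{\lambda}=\overline{\bigcup_{\Pi}P_{\Pi}(K)}$, taking the trivial partition $\Pi=\{1,\lambda\}$ of $[1,\lambda]$ gives $K^{\lambda}\supseteq P_{\lambda}(K)=\conv S_{\lambda}(K)\supseteq S_{\lambda}(K)$, where $S_{\lambda}(K)$ is the star body with radial function $r_{K}^{\lambda}$. In particular $|K^{\lambda}|\ge|S_{\lambda}(K)|$.

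Second, I evaluate $|S_{\lambda}(K)|$ by the standard polar-coordinates formula for a star body. Writing $\sigma$ for the normalized Haar measure on $S^{n-1}$, this formula reads
\[
|S_{\lambda}(K)|=|B_{2}^{n}|\int_{S^{n-1}}r_{K}(\theta)^{n\lambda}\,\dd\sigma(\theta),\qquad |K|=|B_{2}^{n}|\int_{S^{n-1}}r_{K}(\theta)^{n}\,\dd\sigma(\theta).
\]
Applying Jensen's inequality with the convex function $t\mapsto t^{\lambda}$ (where the convexity uses exactly $\lambda\ge1$) to the non-negative function $r_{K}^{n}$ against the probability measure $\sigma$,
\[
\int_{S^{n-1}}r_{K}^{n\lambda}\,\dd\sigma=\int_{S^{n-1}}\bigl(r_{K}^{n}\bigr)^{\lambda}\,\dd\sigma\;\ge\;\left(\int_{S^{n-1}}r_{K}^{n}\,\dd\sigma\right)^{\!\lambda}=\left(\frac{|K|}{|B_{2}^{n}|}\right)^{\!\lambda}.
\]
Combining this with the previous inclusion,
\[
|K^{\lambda}|\ge|S_{\lambda}(K)|\ge|B_{2}^{n}|\cdot\left(\frac{|K|}{|B_{2}^{n}|}\right)^{\!\lambda}=|B_{2}^{n}|^{1-\lambda}|K|^{\lambda}.
\]

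There really is no "hard part" here: the whole content lies in recognizing that Jensen flips the right way when $\lambda>1$, which is dual to the situation for $\lambda<1$ where one instead uses Saroglou's theorem and iterates along a refining sequence of partitions. Here one partition (indeed just $\Pi=\{1,\lambda\}$) already suffices, because the volume bound survives without any convexification of $S_{\lambda}(K)$.
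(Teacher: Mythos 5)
Your proof is correct and follows essentially the same route as the paper: establish $|K^{\lambda}|\ge|S_{\lambda}(K)|$ (the paper cites Proposition~\ref{prop:power-compare}, $r_{K^{\lambda}}\ge r_{K}^{\lambda}$, while you rederive the same inclusion from the single-interval partition), then express volume in polar coordinates and apply Jensen's inequality to $t\mapsto t^{\lambda}$.
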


\begin{proof}
Using integration in polar coordinates we know that for every star
body $A$ we have $\left|A\right|=\left|B_{2}^{n}\right|\int_{S^{n-1}}r_{A}^{n}\dd\sigma$,
where $\sigma$ denotes the uniform probability measure on $S^{n-1}$.
In Proposition \ref{prop:power-compare} we saw that $r_{K^{\lambda}}\ge r_{K}^{\lambda}$
even in the case $\lambda>1$. Using this fact and Jensen's inequality
we immediately obtain
\begin{align*}
\left|K^{\lambda}\right| & =\left|B_{2}^{n}\right|\int_{S^{n-1}}r_{K^{\lambda}}^{n}\dd\sigma\ge\left|B_{2}^{n}\right|\int_{S^{n-1}}\left(r_{K}^{n}\right)^{\lambda}\dd\sigma\ge\left|B_{2}^{n}\right|\left(\int_{S^{n-1}}r_{K}^{n}\dd\sigma\right)^{\lambda}\\
 & =\left|B_{2}^{n}\right|^{1-\lambda}\left(\left|B_{2}^{n}\right|\int_{S^{n-1}}r_{K}^{n}\dd\sigma\right)^{\lambda}=\left|B_{2}^{n}\right|^{1-\lambda}\left|K\right|^{\lambda}.
\end{align*}
\end{proof}
Finally, let us mention that in \cite{Milman2017a} and \cite{Milman2018}
we had another definition of the power $K^{\lambda}$ for $0<\lambda<1$,
which had the additional advantage of interacting well with the polarity
map, in the sense that $\left(K^{\circ}\right)^{\lambda}=\left(K^{\lambda}\right)^{\circ}$.
More generally, we defined the ``weighted geometric mean'' of two
convex bodies in a way that commutes with the polarity map, i.e. $g_{\lambda}(K,T)^{\circ}=g_{\lambda}(K^{\circ},T^{\circ})$.
However this definition was not explicit, and relied on the existence
of ultra-filters. We have no reason to expect the two definitions
to be the same, even though explicit examples are very difficult to
compute. In particular, we do not have similar inequalities regarding
the volume of $K^{\lambda}$ for this other definition. 

\section{Composition of convex bodies}

Return to the general construction of $f(K)$. Consider a function
$f:S^{n-1}\times[0,\infty)\to[0,\infty)$ of two variables. Let $F=\bigcup_{\theta}B_{r_{\theta}\theta}$
be a flower in its canonical representation. We define
\[
f(F)=\bigcup_{\theta\in S^{n-1}}B_{f(\theta,r_{\theta})\theta}.
\]
 As before, we have $\cof\left(f(F)\right)=A\left[\frac{1}{f(\theta,r_{\theta})}\right]$.
For the core $K=F^{-\flower}$ we can again define 
\[
f(K)=f(F)^{-\flower}=f\left(K^{\flower}\right)^{-\flower}.
\]
 Again we have 
\[
f(K)=A\left[\frac{1}{f(\theta,r_{\theta})}\right]^{\circ}=\conv S,
\]
 where $S$ is a star body with radial function $r_{S}(\theta)=f(\theta,r_{\theta})$. 

We will use these formulas to define composition of two convex bodies
$K$ and $T$. Again, we are doing it working with flowers. Consider
two flowers in canonical representations
\[
F_{1}=\bigcup_{\theta\in S^{n-1}}B_{\rho_{1}(\theta)\theta}\text{\ensuremath{\quad\text{and}}}\quad F_{2}=\bigcup_{\theta\in S^{n-1}}B_{\rho_{2}(\theta)\theta}.
\]
 We use $\rho_{1},\rho_{2}$ instead of $r_{1}$ and $r_{2}$ to avoid
confusion between the function $\rho_{i}$ and the radial function
$r_{F_{i}}$. Define 
\[
F_{1}\circ F_{2}=r_{F_{1}}(F_{2}),
\]
 where we think of the radial function $r_{F_{1}}$ as a function
of two variables $r_{F_{1}}:S^{n-1}\times[0,\infty)\to[0,\infty)$
by setting $r_{F_{1}}(\theta,t)=r_{F_{1}}(\theta)\cdot t$. More explicitly,
it means that 
\[
F_{1}\circ F_{2}=\bigcup_{\theta\in S^{n-1}}B_{r_{F_{1}}(\theta,\rho_{2}(\theta))\theta}=\bigcup_{\theta\in S^{n-1}}B_{r_{F_{1}}(\theta)\rho_{2}(\theta)\theta}.
\]

Consider now convex bodies $K$ and $T$. Let $F_{1}=T^{\flower}$
and $F_{2}=K^{\flower}$. Then $\rho_{2}(\theta)=r_{K}(\theta)$,
the radial function of $K$, while $r_{F_{1}}(\theta)=h_{T}(\theta)$.
Therefore, if we extend the definition of $\circ$ to convex bodies
in the usual way, we get 
\begin{equation}
T\circ K=\left(\bigcup_{\theta\in S^{n-1}}B_{h_{T}(\theta)r_{K}(\theta)\theta}\right)^{-\flower}.\label{eq:body-composition}
\end{equation}
 Note an interesting example: For every convex body $T$ we have $T\circ T^{\circ}=B_{2}^{n}$,
because $h_{T}\cdot r_{T^{\circ}}=1$. And we always have 
\[
\left(T\circ K\right)^{\circ}=A\left[\frac{1}{h_{T}\cdot r_{K}}\right].
\]
 The formula \eqref{eq:body-composition} may also be seen as 
\[
T\circ K=h_{T}(K)=\left[h_{T}\left(K^{\flower}\right)\right]^{-\flower}.
\]
 In this form we see that we may also use another function instead
of $h_{T}$ which is naturally connected with $T$\textendash{} its
radial function $r_{T}$. Then a different composition (the radial
composition) will be 
\[
T\odot K:=r_{T}(K)=r_{T}\left(K^{\flower}\right)^{-\flower}=\left(\bigcup_{\theta\in S^{n-1}}B_{r_{T}(\theta)r_{K}(\theta)\theta}\right)^{-\flower}.
\]
 This is a commutative operation, a kind of ``product'' on the class
of convex bodies. 

Note that if $T=B_{2}^{n}$ then both compositions preserve $K$,
i.e. this is the identity map on $\KK_{0}$.
\begin{problem}
For which bodies $T$ do we have a volume inequality of the form 
\[
\left|T\circ\left(K_{1}+K_{2}\right)\right|^{\frac{1}{n}}\ge\left|T\circ K_{1}\right|^{\frac{1}{n}}+\left|T\circ K_{2}\right|^{\frac{1}{n}}
\]
 or 
\[
\left|T\odot\left(K_{1}+K_{2}\right)\right|^{\frac{1}{n}}\ge\left|T\odot K_{1}\right|^{\frac{1}{n}}+\left|T\circ K_{2}\right|^{\frac{1}{n}}?
\]
\end{problem}

For example, when $T=B_{2}^{n}$ these inequalities are obviously
true, as they reduce to the standard Brunn-Minkowski inequality. Hence
one may study e.g. the case when $T$ is very close to the Euclidean
ball. 

The radial composition may be rewritten in an explicit form. Define
``radial product'' $K\cdot T$ to be the star body with radial function
$r_{T\cdot K}(\theta)=r_{T}(\theta)r_{K}(\theta)$. Then just like
in Proposition \ref{prop:f-convex-formula} we have 
\[
T\odot K=\conv\left(T\cdot K\right).
\]

Using the same notation $A\cdot B$ for general star bodies we see
that 
\[
T\circ K=\conv\left(T^{\flower}\cdot K\right),
\]
because the radial function of $T^{\flower}$ is $h_{T}(\theta)$. 

\section{Bodies not containing the origin}

In the introduction we saw the relation $\FF=\cof\left(\KK_{0}\right)$.
In other words, applying the co-spherical inversion $\cof$ to convex
bodies \emph{containing the origin} we exactly obtain the class of
flowers. In this section we will discuss applying the spherical inversion
for convex bodies that do \emph{not} contain the origin. More specifically,
one of the questions we want to study is the following: Let $K$ be
a closed convex set $0\notin K$. What can be said about the convexity
of $\phi(K)$, where $\phi$ denotes the spherical inversion? Note
that since $0\notin K$ the set $\phi(K)$ is bounded, so it makes
sense to study $\phi(K)$ and not $\cof(K)$. For example, it is well
known that if $B$ is any ball not containing the origin, then $\phi(B)$
is also a ball, and in particular convex. 

We begin with a simple criterion for the convexity of $\phi(A)$ for
any set $A$.
\begin{definition}
Fix $x,y\in\RR^{n}$ such that $x,y\ne0$. Let $S$ be the unique
(one dimensional) circle passing through $x$, $y$ and $0$. Then
$\left(x,y\right)$ will denote the arc between $x$ and $y$ along
$S$ that does not pass through $0$. If $x=y$ we set $(x,x)=\left\{ x\right\} $.
\end{definition}

\begin{proposition}
\label{prop:arc-inversion}For any set $A\subseteq\RR^{n}$, the set
$\phi(A)$ is convex if and only if for every $x,y\in A$ we have
$(x,y)\subseteq A$. 
\end{proposition}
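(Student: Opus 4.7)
The plan is to reduce everything to the classical conformal property of spherical inversion: $\phi$ sends each punctured circle $S\setminus\{0\}$, where $S$ is a circle through the origin, bijectively onto an affine line $\ell$, and conversely sends any affine line not through $0$ back to such a punctured circle. Concretely, for distinct nonzero $x,y$ the unique circle through $0,x,y$ is mapped by $\phi$ onto the line through $\phi(x)$ and $\phi(y)$; because $|\phi(z)|\to\infty$ as $z\to 0$, the bounded arc $(x,y)$ that avoids the origin is mapped precisely onto the bounded segment $[\phi(x),\phi(y)]$. This single geometric fact, verifiable by a direct computation in the two-plane containing $0,x,y$ using $\phi(z)=z/|z|^{2}$, is the only nontrivial input I would need.

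Once the identity $\phi((x,y))=[\phi(x),\phi(y)]$ is in hand, the proposition becomes a formal consequence of $\phi$ being an involution on $\RR^{n}\setminus\{0\}$. For the ``only if'' direction, assume $\phi(A)$ is convex. Any $x,y\in A$ give $\phi(x),\phi(y)\in\phi(A)$, so the segment $[\phi(x),\phi(y)]$ lies in $\phi(A)$; applying $\phi$ and using $\phi\circ\phi=\mathrm{id}$ together with the identity above, one gets $(x,y)=\phi([\phi(x),\phi(y)])\subseteq\phi(\phi(A))=A$. The ``if'' direction is identical with the roles of $A$ and $\phi(A)$ swapped: given $u,v\in\phi(A)$, write $u=\phi(x)$, $v=\phi(y)$ with $x,y\in A$; then $(x,y)\subseteq A$ implies $[u,v]=\phi((x,y))\subseteq\phi(A)$.

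I expect the main (minor) obstacle to be handling the degenerate configuration in which $0,x,y$ are collinear, so that no honest circle through the three points exists. In this case one works on the one-dimensional line through the three points, where $\phi$ restricts to a one-variable inversion and the key identity becomes a direct elementary computation; the convention $(x,x)=\{x\}$ disposes of the coincident case. I would dispatch these degeneracies upfront, together with the verification of the main geometric fact, so that the formal argument in the previous paragraph runs uniformly for all pairs $x,y\in A$.
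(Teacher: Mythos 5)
Your proposal is correct and follows essentially the same route as the paper: both proofs hinge on the identity $\phi([z,w])=(\phi(z),\phi(w))$ (equivalently $\phi((x,y))=[\phi(x),\phi(y)]$) together with the involutive property of $\phi$, and then unwind the definition of convexity. You are merely more explicit about the underlying conformal fact and the degenerate collinear case, which the paper leaves implicit.
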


\begin{proof}
$\phi(A)$ is convex if and only if for every two points $z,w\in\phi(A)$
we have $[z,w]\subseteq\phi(A)$. This is equivalent to 
\[
\left(\phi(z),\phi(w)\right)=\phi\left(\left[z,w\right]\right)\subseteq\phi\left(\phi(A)\right)=A.
\]
 Since $x=\phi(z)$ and $y=\phi(w)$ are arbitrary points in $A$
the proof is complete. 
\end{proof}
In order to continue our discussion we will need the following definitions:
\begin{definition}
Let $A\subseteq\RR^{n}$ be a closed set such that $0\notin A$. Then:
\begin{enumerate}
\item The outer cone of $A$, or out-cone of $A$ for short, is 
\[
\outc A=\bigcup_{\lambda\ge1}\lambda A.
\]
\item The inner cone of $A$, or in-cone of $A$ for short, is 
\[
\inc A=\bigcup_{0<\lambda\le1}\lambda A.
\]
\item We say that $A$ is an outer cone if $A=\outc A$, and similarly for
inner cones. 
\end{enumerate}
\end{definition}

Let $K$ be a closed convex set with $0\notin K$. We then have 
\[
K=\outc K\cap\inc K,
\]
 and therefore
\[
\phi(K)=\phi\left(\outc K\right)\cap\phi(\inc K).
\]
 Furthermore, it is easy to see that 
\[
\phi\left(\outc K\right)=\inc\left(\phi(K)\right)
\]
 and 
\[
\phi\left(\inc K\right)=\outc\left(\phi(K)\right).
\]
So $\phi$ exchanges out-cones and in-cones. Therefore the question
about the convexity of $\phi(K)$ splits naturally into two questions:
\begin{lemma}
\label{lem:in-out}$\phi(K)$ is convex if and only if $\inc\left(\phi(K)\right)$
and $\outc\left(\phi(K)\right)$ are both convex.
\end{lemma}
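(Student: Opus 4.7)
The plan is to reduce both directions of the lemma to bookkeeping plus a short convex-combination computation.

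The direction $(\Leftarrow)$ is immediate from the paragraph preceding the lemma. Applying $\phi$ to the identity $K = \outc K \cap \inc K$ and using the swap relations $\phi(\outc K) = \inc(\phi(K))$ and $\phi(\inc K) = \outc(\phi(K))$ already gives
\[
\phi(K) = \inc(\phi(K)) \cap \outc(\phi(K)).
\]
Since an intersection of convex sets is convex, assuming both cones are convex forces $\phi(K)$ to be convex.

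For $(\Rightarrow)$, I would prove the stronger statement that for \emph{any} convex set $L \subseteq \RR^n$ with $0 \notin L$, both $\inc L$ and $\outc L$ are automatically convex, and then apply this to $L = \phi(K)$. The verification is a direct convex-combination calculation: given $p = \lambda x$ and $q = \mu y$ with $x, y \in L$, and $t \in [0,1]$, set $\nu = t\lambda + (1-t)\mu$ and write
\[
tp + (1-t)q = \nu \Bigl( \tfrac{t\lambda}{\nu}\, x + \tfrac{(1-t)\mu}{\nu}\, y \Bigr).
\]
The expression in parentheses lies in $L$ by convexity, while $\nu$ inherits the bounds from $\lambda$ and $\mu$. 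If $\lambda, \mu \ge 1$ then $\nu \ge 1$, placing the combination in $\outc L$; if $0 < \lambda, \mu \le 1$ then $0 < \nu \le 1$, placing it in $\inc L$. Strict positivity of $\nu$ in the in-cone case follows because $t$ and $1-t$ cannot vanish simultaneously.

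Main obstacle: there really isn't one. The lemma is essentially tautological once the decomposition $\phi(K) = \inc(\phi(K)) \cap \outc(\phi(K))$ from the preceding paragraph and the short cone-convexity calculation above are in place; the only point requiring any care is the $\nu = 0$ edge case noted above, and it is ruled out automatically.
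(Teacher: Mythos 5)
Your proof is correct and is essentially the same as the paper's. The $(\Leftarrow)$ direction is the obvious intersection argument in both cases, and for $(\Rightarrow)$ your convex-combination computation showing $\nu = t\lambda + (1-t)\mu$ inherits the bound $\nu \le 1$ (resp.\ $\nu \ge 1$) and stays positive is precisely the paper's manipulation, just packaged as a standalone claim about a general convex $L$ rather than written directly for $L = \phi(K)$.
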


\begin{proof}
One direction is obvious: If $\inc\left(\phi(K)\right)$ and $\outc\left(\phi(K)\right)$
are convex so is their intersection, which is exactly $\phi(K)$. 

Conversely, assume $\phi(K)$ is convex, and fix $x,y\in\inc\left(\phi(K)\right)$.
By definition there are $a,b\ge1$ such that $ax,by\in\phi(K)$. For
every $0<\lambda<1$ we have
\[
(1-\lambda)x+\lambda y=\underbrace{\left(\frac{1-\lambda}{a}+\frac{\lambda}{b}\right)}_{\mu}\cdot\underbrace{\left(\frac{\frac{1-\lambda}{a}}{\frac{1-\lambda}{a}+\frac{\lambda}{b}}ax+\frac{\frac{\lambda}{b}}{\frac{1-\lambda}{a}+\frac{\lambda}{b}}by\right)}_{z}.
\]
 Since $\phi(K)$ is convex we know that $z\in\phi(K)$, and since
$\mu\le1$ it follows that $\mu z\in\inc\left(\phi(K)\right)$. Hence
$\inc\left(\phi(K)\right)$ is indeed convex. The proof for $\outc\left(\phi(K)\right)$
is the same. 
\end{proof}
It turns out that there is no symmetry between out-cones and in-cones.
Of the two conditions, one is satisfied automatically:
\begin{theorem}
If $C$ be a convex out-cone then $\phi(C)$ is a convex in-cone.
However, the converse is false: there exists a convex in-cone $C_{0}$
such that $\phi(C_{0})$ is not convex. 
\end{theorem}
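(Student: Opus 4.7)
My plan is to split the statement into the positive direction (convex out-cone $C$ gives a convex in-cone $\phi(C)$) and the counterexample. Both parts hinge on the arc-criterion of Proposition~\ref{prop:arc-inversion}.

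For the positive direction, the fact that $\phi(C)$ is an in-cone is immediate from the identity $\phi(\outc A)=\inc(\phi(A))$ noted in the excerpt, applied with $A=C=\outc C$. For convexity of $\phi(C)$, Proposition~\ref{prop:arc-inversion} reduces the problem to showing that for any $x,y\in C$ the arc $(x,y)$ (circle through $0,x,y$, branch not through $0$) lies in $C$. The main lemma I would prove is: for every $p\in(x,y)$ the ray from $0$ through $p$ meets the chord $[x,y]$ at a point $q$ with $|q|\le|p|$. Granted the lemma, $q\in[x,y]\subseteq C$ by convexity and $p=\lambda q$ with $\lambda=|p|/|q|\ge1$, so $p\in\lambda C\subseteq C$ by the out-cone property. (The degenerate case $0,x,y$ collinear is trivial: the ``arc'' reduces to $[x,y]\subseteq C$, and $0\notin[x,y]$ because $C$ avoids $0$.)

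The lemma in turn reduces to showing the arc lies in the convex cone $W=\{\alpha x+\beta y:\alpha,\beta\ge0\}$ inside the $2$-plane through $0,x,y$. I would prove this by applying $\phi$ itself: $\phi$ sends the circle through $0,x,y$ to the line through $\phi(x),\phi(y)$, with the arc not through $0$ mapping to the bounded segment $[\phi(x),\phi(y)]$. Since $\phi(x),\phi(y)$ are positive multiples of $x,y$, this segment lies in $W$; and since $W$ is a union of rays through $0$, it is $\phi$-invariant, so the arc itself lies in $W$. Combined with the fact that $W\cap(\text{chord line})=[x,y]$ and that $0,p$ lie strictly on opposite sides of the chord line (by choice of the arc), the ray from $0$ through $p$ crosses the chord segment at some $q$ between $0$ and $p$, giving the required $|q|\le|p|$.

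For the counterexample, I would take $C_0=\{(p_1,p_2)\in\RR^2:0<p_2\le1\}$, a horizontal strip. It is convex, avoids $0$, and satisfies $\lambda C_0\subseteq C_0$ for every $\lambda\in(0,1]$, so it is a convex in-cone. With $x=(1,1)$ and $y=(-1,1)$ the unique circle through $0,x,y$ has centre $(0,1)$ and radius $1$, so the arc from $x$ to $y$ not through $0$ passes through $(0,2)$, which is \emph{not} in $C_0$ because its second coordinate exceeds $1$. By Proposition~\ref{prop:arc-inversion}, $\phi(C_0)$ is not convex. The main obstacle in the whole argument is the geometric lemma in the positive direction; the inversion-into-a-line trick renders it transparent and simultaneously explains the asymmetry between the two cases --- the out-cone absorbs the arc because the arc is pushed outward relative to the chord, while a convex in-cone of finite outward extent can easily fail to contain such an outward-bulging arc.
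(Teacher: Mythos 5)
Your proof is correct and follows the same overall strategy as the paper: reduce to the arc criterion of Proposition~\ref{prop:arc-inversion}, show that every point of the arc $(x,y)$ is a $\lambda\ge 1$ multiple of a point on the chord $[x,y]$, and exhibit an in-cone for which the arc escapes. The one genuine difference is in the level of detail on the key geometric claim. The paper simply asserts that ``every point $z\in(x,y)$ can be written as $z=\lambda w$ for $w\in[x,y]$ and $\lambda\ge 1$'' without proof, whereas you derive it carefully: applying $\phi$ sends the arc to the segment $[\phi(x),\phi(y)]$, which sits inside the cone $W$ spanned by $x,y$; since $W\setminus\{0\}$ is $\phi$-invariant the arc also lies in $W$; and since $0$ and the arc are on opposite sides of the chord line, the ray from $0$ through any arc point must cross $[x,y]$ at a nearer point. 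This fills a real gap in the paper's exposition and makes the asymmetry between out-cones and in-cones transparent, as you note. Your counterexample (the horizontal strip in $\RR^2$) is a concrete instance of the paper's more general one ($\inc K$ for $K$ contained in an affine hyperplane avoiding $0$); both rest on the same observation that the arc bulges to the far side of the hyperplane. One small point worth flagging in both your example and the paper's: $\inc$ of such a set is not actually closed near the origin, so strictly speaking the ``closed'' hypothesis in the definition of in-cone should be read as closed away from $0$; this is a harmless technicality shared with the original.
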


\begin{proof}
Let $C$ be a convex out-cone and fix $x,y\in C$. Note that every
point $z\in(x,y)$ can be written as $z=\lambda w$ for $w\in[x,y]$
and $\lambda\ge1$. Since $C$ is convex it follows that $w\in C$,
and since $C$ is an out-cone we have $z=\lambda w\in C$. Hence $(x,y)\subseteq C$,
so by Proposition \ref{prop:arc-inversion} it follows that $\phi(C)$
is indeed convex. 

To show that the converse is false there are many possible counter-examples.
For example, fix any affine hyperplane $H\subseteq\RR^{n}$ such that
$0\notin H$ and any convex body $K\subseteq H$. Consider $C_{0}=\inc K$.
Then for $x,y\in K$ we clearly have $(x,y)\not\subseteq C_{0}$,
so $\phi(C_{0})$ is not convex. 
\end{proof}
We summarize the discussion in the following corollary:
\begin{corollary}
The following are equivalent for a convex body $K$ with $0\notin K$:
\begin{enumerate}
\item $\phi(K)$ is convex.
\item $\phi\left(\inc K\right)$ is convex. 
\item For every $x,y\in\inc K$ we have $(x,y)\in\inc K$. 
\end{enumerate}
\end{corollary}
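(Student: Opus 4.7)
The plan is to assemble the corollary from three results already proved in the paper: Proposition \ref{prop:arc-inversion} (the arc criterion), Lemma \ref{lem:in-out} (the in-out decomposition of $\phi(K)$), and the preceding theorem which says that convex out-cones invert to convex sets. The only small piece that is not quite on the shelf is that $\outc K$ is itself convex whenever $K$ is convex, so I would begin there.

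First I would check the auxiliary fact that $\outc K$ is a convex out-cone. Given two points $\lambda_1 x_1, \lambda_2 x_2 \in \outc K$ with $\lambda_i \ge 1$ and $x_i \in K$, and $t \in [0,1]$, write
\[
t\lambda_1 x_1 + (1-t)\lambda_2 x_2 = \mu\!\left(\tfrac{t\lambda_1}{\mu}x_1 + \tfrac{(1-t)\lambda_2}{\mu}x_2\right),\qquad \mu := t\lambda_1+(1-t)\lambda_2.
\]
Since $\mu \ge 1$ and the bracketed expression is a convex combination of $x_1,x_2$, hence in $K$, the whole point lies in $\outc K$. Thus $\outc K$ is convex; that it is an out-cone is immediate from the definition.

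Now I would plug this into the preceding theorem: $\phi(\outc K)$ is convex. Using the identity $\phi(\outc K) = \inc(\phi K)$ (recorded just before Lemma \ref{lem:in-out}), this shows that $\inc(\phi K)$ is \emph{automatically} convex for any convex body $K$ with $0\notin K$. Applying Lemma \ref{lem:in-out}, which reduces convexity of $\phi(K)$ to convexity of both $\inc(\phi K)$ and $\outc(\phi K)$, the first condition is free and so
\[
\phi(K)\text{ is convex}\ \Longleftrightarrow\ \outc(\phi K)\text{ is convex}\ \Longleftrightarrow\ \phi(\inc K)\text{ is convex},
\]
where in the last step I use the second identity $\outc(\phi K)=\phi(\inc K)$. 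This gives (1)$\iff$(2).

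For (2)$\iff$(3), I would simply invoke Proposition \ref{prop:arc-inversion} with $A=\inc K$: convexity of $\phi(\inc K)$ is equivalent to $(x,y)\subseteq \inc K$ for all $x,y\in \inc K$. Since every step either invokes a named result or is the one-line convexity verification for $\outc K$, I do not expect any real obstacle. The only point requiring care is bookkeeping the two identities $\phi(\outc K)=\inc(\phi K)$ and $\phi(\inc K)=\outc(\phi K)$ in the right direction, so that the asymmetry of the preceding theorem (which gives convexity only for out-cones, not in-cones) is exploited on the correct side.
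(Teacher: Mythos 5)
Your argument matches the paper's own proof: apply Lemma \ref{lem:in-out} to split convexity of $\phi(K)$ into convexity of $\inc(\phi K)=\phi(\outc K)$ and $\outc(\phi K)=\phi(\inc K)$, note the first is automatic by the theorem on out-cones, and then use Proposition \ref{prop:arc-inversion} for (2)$\Leftrightarrow$(3). The only difference is that you explicitly verify $\outc K$ is convex (a small fact the paper leaves implicit when invoking the theorem), which is a sensible addition but not a departure.
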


\begin{proof}
By Lemma \ref{lem:in-out} the set $\phi(K)$ is convex if and only
if $\phi\left(\outc K\right)=\inc\left(\phi(K)\right)$ and $\phi\left(\inc K\right)=\outc\left(\phi(K)\right)$
are both convex. By the theorem we know that $\phi(\outc K)$ is always
convex, so the equivalence of (1) and (2) follows. The equivalence
of (2) and (3) follows immediately from Proposition \ref{prop:arc-inversion}. 
\end{proof}

\section{Local Theory of Flowers}

The main goal of this section is to prove that every origin-symmetric
flower $F\subseteq\RR^{n}$ has a ``large'' dimensional projection
which is ``almost'' a Euclidean ball. More formally, we define the
geometric distance of two origin-symmetric star bodies $A,B\subseteq\RR^{n}$
to be 
\[
d(A,B)=\inf\left\{ a\cdot b:\ \frac{1}{a}A\subseteq B\subseteq b\cdot A\right\} .
\]
Note that for every convex body $K$ we have 
\[
d(K,B_{2}^{n})=\frac{\max_{\theta\in S^{n-1}}h_{K}(\theta)}{\min_{\theta\in S^{n-1}}h_{K}(\theta)}=\frac{\max_{\theta\in S^{n-1}}r_{K^{\flower}}(\theta)}{\min_{\theta\in S^{n-1}}r_{K^{\flower}}(\theta)}=d(K^{\flower},B).
\]

Our theorem is then as follows:
\begin{theorem}
\label{thm:flower-dvoretzky}Let $F\subseteq\RR^{n}$ be a flower
with $F=-F$. Then for every $\epsilon>0$ there exists a subspace
$E$ of dimension $\dim E=c(\epsilon)\cdot n$ such that 
\[
d\left(P_{E}F,B_{2}^{E}\right)\le1+\epsilon.
\]
\end{theorem}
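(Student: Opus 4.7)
My plan is to reduce the theorem to a proportional-dimension Dvoretzky--Milman statement applied to the convex hull $\conv F$. Let $K = F^{-\flower}$ be the origin-symmetric core, write $F = \bigcup_{x \in K} B_x$, and set $R := \max_{x \in K}|x|$. Each petal has support function $h_{B_x}(\theta) = (|x|+\langle x,\theta\rangle)/2$, so
\[
h_F(\theta) := h_{\conv F}(\theta) = \sup_{x \in K}\frac{|x|+\langle x,\theta\rangle}{2}.
\]
The crucial preliminary is the two-sided bound $R/2 \leq h_F(\theta) \leq R$ on $S^{n-1}$. The upper bound is immediate from $B_x \subseteq B(0,|x|) \subseteq RB_2^n$. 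For the lower bound I pick $x^* \in K$ with $|x^*| = R$; origin-symmetry gives $\pm x^* \in K$, so $h_F(\theta) \geq \max_\pm h_{B_{\pm x^*}}(\theta) = (R+|\langle x^*,\theta\rangle|)/2 \geq R/2$. Thus $d(\conv F, B_2^n) \leq 2$, so $\conv F$ has Dvoretzky dimension of order~$n$.

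By Milman's quantitative Dvoretzky theorem applied to $\conv F$, there is a subspace $E_0 \subseteq \RR^n$ of dimension at least $c_0(\epsilon)n$ on which $h_F$ is $(1+\epsilon)$-close to a constant $M \in [R/2, R]$.

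To transfer this back to the non-convex flower $F$, I compute the radial function directly. Each projected petal $P_E B_x$ is the ball in $E$ with centre $P_E x/2$ and radius $|x|/2$; a direct computation of its radial function and taking the supremum over $x \in K$ yields
\[
r_{P_E F}(\theta) = \sup_{x\in K}\frac{\langle\theta,x\rangle+\sqrt{\langle\theta,x\rangle^2+|P_{E^\perp}x|^2}}{2}\leq h_F(\theta)\leq (1+\epsilon)M
\]
for $\theta \in E \cap S^{n-1}$. For the matching lower bound I would shrink $E$ inside $E_0$ to a random sub-subspace of dimension $c_1 n$ with $c_1 = \Theta(\epsilon)$. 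For such a small random $E$ containing $\theta$, concentration of $|P_{E^\perp}x|^2$ around its mean $(1-c_1)(|x|^2-\langle\theta,x\rangle^2)$, combined with plugging in the extremizer $x^*_\theta$ of $h_F(\theta)$, yields
\[
r_{P_E F}(\theta) \geq h_F(\theta) - O(c_1)\cdot R \geq M\bigl(1-O(\epsilon)\bigr),
\]
using $h_F \geq R/2$ and $M \geq R/2$. Combining the two bounds gives $d(P_E F, B_2^E) \leq 1 + O(\epsilon)$, with $\dim E = c(\epsilon) n$ as required.

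The most delicate step is making the lower bound uniform in $\theta$, since the extremizer $x^*_\theta$ depends on $\theta$ and one cannot simply apply concentration of $|P_{E^\perp}x|^2$ at a single point. I would handle this by combining a chaining or $\epsilon$-net argument on $E\cap S^{n-1}$ with Gaussian concentration in the Grassmannian, using the mean-width estimate for $K$ to control the uniform deviation of $|P_{E^\perp}x|^2$ as both $x$ and $\theta$ vary. This uniform control is the main technical hurdle.
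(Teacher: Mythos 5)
Your first half matches the paper exactly: both you and the authors reduce to the convex hull $\widetilde F=\conv F$, observe (via origin-symmetry) the crucial two-sided bound $d(\widetilde F,B_2^n)\le 2$, and then invoke Milman's proportional-dimension form of Dvoretzky's theorem to find $E$ of dimension $c(\epsilon)n$ with $d(P_E\widetilde F, B_2^E)$ very close to $1$. The formula you derive for $r_{P_E B_x}$ and the resulting inequality $r_{P_E F}(\theta)\le h_{\conv F}(\theta)$ on $E\cap S^{n-1}$ are also correct.

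The gap is the entire lower bound, which you yourself flag as ``the main technical hurdle'' and do not resolve. Your plan --- pass to a random sub-subspace $E\subseteq E_0$, argue concentration of $|P_{E^\perp}x|^2$ at the $\theta$-dependent extremizer $x^*_\theta$, and then make this uniform in $\theta$ by chaining on $E\cap S^{n-1}$ --- is not carried out, and it is far from clear it can be, at least not without substantial work: the extremizer moves with $\theta$, the randomness in $E$ lives only inside $E_0$ (so the concentration is with respect to $\dim E_0 - \dim E$ random directions, not $n$), and the error you need to beat is additive of size $O(\epsilon)R$ uniformly over a $c_1 n$-dimensional sphere of directions. Nothing in the sketch pins down how the union bound or chaining increment is controlled, nor what replaces the worst-case $\sup_{x\in K}$ inside the concentration step.

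The paper sidesteps all of this with a deterministic stability lemma (Theorem~\ref{thm:flower-stability}): if $G$ is a \emph{flower} and $d(\conv G, B_2^n)=1+\delta$ with $\delta<\tfrac{1}{10}$, then $d(G,B_2^n)\le 1+3\sqrt\delta$. This is a purely geometric statement about a single flower, proved by taking any direction $\theta$, finding a point of $G$ in the slab $\langle z,\theta\rangle\ge\frac{1}{1+\delta}$, using the defining inequality $|z|^2\le\langle z,y\rangle$ of the petal $B_y$ containing it, and deducing that $(1-\sqrt{2\delta})\theta\in B_y\subseteq G$. Since $P_E F$ is itself a flower and $\conv(P_E F)=P_E\widetilde F$, the lemma applies directly after Dvoretzky (run with $\epsilon^2/9$ in place of $\epsilon$ to absorb the square root), with no randomness, no nets, and no chaining. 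This is the key idea you are missing: the problem is not to control $P_E F$ probabilistically, but to notice that a flower whose convex hull is nearly round must itself be nearly round, a fact special to flowers (it is false for general star bodies). I would suggest you attempt to prove that lemma directly rather than pursue the concentration route.
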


Here of course $B_{2}^{E}=B_{2}^{n}\cap E$. The same theorem for
convex bodies instead of flowers is of course the famous Dvoretzky's
theorem. However, in Dvoretzky's theorem the dependence of $\dim E$
on $n$ is much poorer: It is well known that if for example 
\[
K=B_{1}^{n}=\conv\left\{ \pm e_{1},\pm e_{2},\ldots,\pm e_{n}\right\} \subseteq\RR^{n}
\]
 and $d\left(P_{E}K,B_{2}^{E}\right)\le1+\epsilon$, then necessarily
$\dim E\le c(\epsilon)\log n$. This is in contrast with our theorem,
where $\dim E$ is proportional to $n$. 

On a related note, observe that if $F=\left(B_{1}^{n}\right)^{\flower}$
then for every subspace $E$ we have 
\[
d\left(F\cap E,B_{2}^{E}\right)=d\left(\left(P_{E}B_{1}^{n}\right)^{\flower},B_{2}^{n}\right)=d\left(P_{E}B_{1}^{n},B_{2}^{n}\right),
\]
 which is large unless $\dim E\le c(\epsilon)\log n$. In other words,
there is no analogue of Theorem \ref{thm:flower-dvoretzky} where
the projection $P_{E}F$ is replaced with the intersection $F\cap E$.
This is unlike the classical theorem for convex bodies, where the
two versions are easily seen to be equivalent. 

Of course, our theorem does imply a corollary for convex bodies:
\begin{corollary}
Let $K\subseteq\RR^{n}$ be a convex body with $K=-K$. Then for every
$\epsilon>0$ there exists a subspace $E$ of dimension $\dim E=c(\epsilon)\cdot n$
such that 
\[
d\left(\left(P_{E}K^{\flower}\right)^{-\flower},B_{2}^{E}\right)\le1+\epsilon.
\]
 
\end{corollary}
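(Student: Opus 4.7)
The plan is to obtain the corollary as a direct transport of Theorem \ref{thm:flower-dvoretzky} through the flower/core correspondence. Two ingredients already established in the paper do all the work: first, if $F$ is a flower then $P_{E}F$ is again a flower in $E$; second, the geometric distance to the Euclidean ball is preserved under $\flower$, since $r_{K^{\flower}}(\theta)=h_{K}(\theta)$ on $S^{n-1}$ so the ratios $\max/\min$ of radial and support functions agree on the two sides.

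First I would set $F=K^{\flower}$. Because $K=-K$ and the flower map is $\mathrm{GL}$-equivariant (clear from the canonical representation $F=\bigcup_{x\in\partial K}B_{x}$, or equivalently from the identity $r_{F}=h_{K}$, which is an even function when $K$ is origin-symmetric), the flower $F$ is itself origin-symmetric. Applying Theorem \ref{thm:flower-dvoretzky} to $F$ produces a subspace $E\subseteq\RR^{n}$ with $\dim E\ge c(\epsilon)\cdot n$ such that
\[
d\bigl(P_{E}F,\,B_{2}^{E}\bigr)\le 1+\epsilon.
\]

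Next I translate this back to the convex body side inside $E$. The projection $P_{E}F=P_{E}K^{\flower}$ is a flower in $E$, and its core is, by definition, the convex body $L:=(P_{E}K^{\flower})^{-\flower}\in\KK_{0}(E)$. Applying the distance identity recalled at the start of Section 6, but in the ambient space $E$ rather than $\RR^{n}$, to this $L$ yields
\[
d\bigl((P_{E}K^{\flower})^{-\flower},\,B_{2}^{E}\bigr)\;=\;d\bigl(P_{E}K^{\flower},\,B_{2}^{E}\bigr)\;\le\;1+\epsilon,
\]
which is exactly the inequality claimed by the corollary.

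There is no real obstacle in the argument: the entire proof is a dictionary between flowers and convex bodies. The only point requiring verification is that the flower/core machinery of the introduction (the correspondence $\flower$, the projection-is-a-flower property, and the distance identity) is intrinsic to the ambient subspace $E$, so that its restriction to $E$ produces genuine flowers and cores in $E$ with the same formal properties. This is immediate from the radial/support function descriptions, which depend only on the values taken on $S^{n-1}\cap E$.
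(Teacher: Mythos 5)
Your proof is correct and is exactly the argument the paper has in mind (the paper does not even write it out, presenting the corollary as an immediate consequence). You apply Theorem \ref{thm:flower-dvoretzky} to $F=K^{\flower}$, which is origin-symmetric since $r_{K^{\flower}}=h_{K}$ is even, and then convert $d(P_{E}K^{\flower},B_{2}^{E})\le 1+\epsilon$ into the statement about the core via the identity $d(L,B_{2}^{E})=d(L^{\flower},B_{2}^{E})$ applied inside $E$, using that $P_{E}K^{\flower}$ is a flower in $E$.
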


However, as was explained in the introduction, we do not have a good
understanding of the body $\left(P_{E}K^{\flower}\right)^{-\flower}$. 

For the proof of Theorem \ref{thm:flower-dvoretzky} we will need
the following result, which may be of independent interest:
\begin{theorem}
\label{thm:flower-stability}Let $F\subseteq\RR^{n}$ be a flower.
Assume that $d(\conv F,B_{2}^{n})=1+\epsilon$ for $\epsilon<\frac{1}{10}$.
Then 
\[
d(F,B_{2}^{n})\le1+3\sqrt{\epsilon}.
\]
\end{theorem}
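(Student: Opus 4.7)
The plan leans on two identities from earlier in the paper: a flower admits the representation $F = \bigcup_{y \in K} B_y$ with core $K = F^{-\flower}$, and its radial function satisfies $r_F = h_K$ on $S^{n-1}$.

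First I would use scale-invariance of $d(\cdot, B_{2}^{n})$ to rescale $F$ so that $B_{2}^{n} \subseteq \conv F \subseteq (1+\epsilon)\, B_{2}^{n}$; this is legitimate because for any compact body $A$ with $0$ in its interior one has $d(A, B_{2}^{n}) = \max r_A / \min r_A$, and the optimal witnesses can be absorbed into a rescaling. Next I observe that $\max r_F = \max r_{\conv F} = 1+\epsilon$: the farthest point of the compact convex set $\conv F$ from the origin maximizes the convex function $x \mapsto |x|$, hence is an extreme point of $\conv F$, and therefore lies in $F$ itself. Via the identity $r_F = h_K$, this means $\max h_K = 1+\epsilon$.

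The key computation is an upper bound for $h_F$ at a direction $\eta_0$ minimizing $h_K$. Setting $m := h_K(\eta_0) = \min h_K$ and using the explicit support function of a petal, $h_{B_y}(\eta) = (|y| + \langle y, \eta\rangle)/2$ for $\eta \in S^{n-1}$, I get
\[
h_F(\eta_0) \;=\; \sup_{y \in K} \frac{|y| + \langle y, \eta_0\rangle}{2} \;\le\; \frac{\max_{y \in K} |y| + h_K(\eta_0)}{2} \;=\; \frac{(1+\epsilon)+m}{2}.
\]
Combined with the lower bound $h_F(\eta_0) = h_{\conv F}(\eta_0) \ge 1$ coming from $B_{2}^{n} \subseteq \conv F$, this forces $m \ge 1-\epsilon$, and hence
\[
d(F, B_{2}^{n}) \;=\; \frac{\max h_K}{\min h_K} \;\le\; \frac{1+\epsilon}{1-\epsilon}.
\]

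It remains only to verify $(1+\epsilon)/(1-\epsilon) \le 1 + 3\sqrt{\epsilon}$ for $\epsilon < 1/10$; after rearrangement this is $2\sqrt{\epsilon} + 3\epsilon \le 3$, which holds on the given range since $\sqrt{\epsilon} < 1/\sqrt{10}$. I expect no serious obstacles in carrying this out; the only non-routine step is the extreme-point identification $\max r_F = \max r_{\conv F}$, and that rests on the standard fact that every extreme point of the convex hull of a compact set lies in the set.
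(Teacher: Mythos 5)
Your proof is correct, and it takes a genuinely different route from the paper's. The paper argues directly with the geometry of a single petal: it fixes a direction $\theta$, uses $\conv F\supseteq \tfrac{1}{1+\epsilon}B_2^n$ to produce a point $x\in F$ with $\langle x,\theta\rangle\ge\tfrac1{1+\epsilon}$, passes to a petal $B_y\subseteq F$ containing $x$, and via Cauchy--Schwarz and a one-variable monotonicity estimate shows $\langle\theta,y\rangle\ge 1-\sqrt{2\epsilon}$, hence $(1-\sqrt{2\epsilon})\theta\in B_y\subseteq F$. You instead pass to the core $K=F^{-\flower}$, use the identity $r_F=h_K$ together with the exact support function $h_{B_y}=\tfrac12(|y|+\langle y,\cdot\rangle)$ of a petal, and squeeze $h_F(\eta_0)=h_{\conv F}(\eta_0)$ from both sides at the direction $\eta_0$ minimizing $h_K$. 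A concrete payoff of your route is that it yields the sharper estimate $d(F,B_2^n)\le\tfrac{1+\epsilon}{1-\epsilon}\approx 1+2\epsilon$, whereas the paper's computation gives $d(F,B_2^n)\le\tfrac{1}{1-\sqrt{2\epsilon}}$ before weakening to $1+3\sqrt{\epsilon}$; and your bound is in fact attained with equality for $F=\flower B_1^2$ in the plane, where $\epsilon=3-2\sqrt{2}$ and $\tfrac{1+\epsilon}{1-\epsilon}=\sqrt{2}=d(F,B_2^n)$. Two small remarks on the write-up: the rescaling witnesses need not be tight --- the argument only uses $\max_{y\in K}|y|=\max h_K\le 1+\epsilon$ and $h_F(\eta_0)\ge 1$ --- and $\max r_F=\max r_{\conv F}$ follows even more simply from the triangle inequality applied to a convex combination, without invoking extreme points.
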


Obviously, the assumption that $F$ is a flower is crucial, as no
such theorem can be true for general star bodies. 
\begin{proof}
We may assume without loss of generality that $\frac{1}{1+\epsilon}B_{2}^{n}\subseteq\conv F\subseteq B_{2}^{n}$.
We obviously have $F\subseteq B_{2}^{n}$.

Fix a direction $\theta\in S^{n-1}$ and write $w=\frac{1}{1+\epsilon}\theta\in\conv F$.
Consider the half-space 
\[
H=\left\{ z:\ \left\langle z,\theta\right\rangle \ge\frac{1}{1+\epsilon}\right\} .
\]
 We claim that $H\cap F\ne\emptyset$: If not then $F\subseteq H^{c}$,
and then $\conv F\subseteq H^{c}$ so $w\in H^{c}$. This is a contradiction.
We may therefore choose a point $x\in H\cap F$. Since $F\subseteq B_{2}^{n}$
we obviously have $\left|x\right|\le1$.

Since $x\in F$ we know that $x\in B_{y}\subseteq F$ for some $y\in\RR^{n}$,
where as usual 
\[
B_{y}=B\left(\frac{y}{2},\frac{\left|y\right|}{2}\right)=\left\{ z:\ \left|z\right|^{2}\le\left\langle z,y\right\rangle \right\} .
\]
 Since $y\in B_{y}\subseteq F\subseteq B_{2}^{n}$ we have $\left|y\right|\le1$. 

Our goal is to understand for which values of $\lambda$ we have $\lambda\theta\in B_{y}$.
Towards this goal observe that $x\in H$ and hence 
\[
\left\langle x,w\right\rangle =\frac{1}{1+\epsilon}\left\langle x,\theta\right\rangle \ge\frac{1}{\left(1+\epsilon\right)^{2}},
\]
 so 
\[
\left|x-w\right|^{2}=\left|x\right|^{2}-2\left\langle x,w\right\rangle +\left|w\right|^{2}\le\left|x\right|^{2}-\frac{2}{(1+\epsilon)^{2}}+\frac{1}{(1+\epsilon)^{2}}=\left|x\right|^{2}-\frac{1}{(1+\epsilon)^{2}}.
\]
 It follows that 
\begin{align*}
\left\langle w,y\right\rangle  & =\left\langle x,y\right\rangle -\left\langle x-w,y\right\rangle \ge\left|x\right|^{2}-\left|x-w\right|\left|y\right|\\
 & \ge\left|x\right|^{2}-\left|x-w\right|\ge\left|x\right|^{2}-\sqrt{\left|x\right|^{2}-\frac{1}{(1+\epsilon)^{2}}}
\end{align*}
 where we used the fact that $x\in B_{y}$ and Cauchy-Schwarz. 

Simple calculus shows that if $\frac{3}{4}<a<1$ then the function
$\phi(t)=t-\sqrt{t-a}$ is decreasing on the interval $[a,1]$. For
$\epsilon<\frac{1}{10}$ we have $\frac{1}{(1+\epsilon)^{2}}>\frac{3}{4}$,
so
\[
\left\langle w,y\right\rangle \ge\left|x\right|^{2}-\sqrt{\left|x\right|^{2}-\frac{1}{(1+\epsilon)^{2}}}\ge1-\sqrt{1-\frac{1}{(1+\epsilon)^{2}}.}
\]
Therefore 
\[
\left\langle \theta,y\right\rangle =(1+\epsilon)\left\langle w,y\right\rangle \ge1+\epsilon-\sqrt{\left(1+\epsilon\right)^{2}-1}=1+\epsilon-\sqrt{\epsilon^{2}+2\epsilon}\ge1-\sqrt{2\epsilon}.
\]
 It follows that if $\lambda\le1-\sqrt{2\epsilon}$ then $\lambda\le\left\langle \theta,y\right\rangle $,
so 
\[
\left|\lambda\theta\right|^{2}=\lambda^{2}\le\lambda\cdot\left\langle \theta,y\right\rangle =\left\langle \lambda\theta,y\right\rangle ,
\]
 so $\lambda\theta\in B_{y}\subseteq F$. 

Since the direction $\theta\in S^{n-1}$ was arbitrary it follows
that $F\supseteq\left(1-\sqrt{2\epsilon}\right)\cdot B_{2}^{n}$,
so 
\[
d(F,B_{2}^{n})\le\frac{1}{1-\sqrt{2\epsilon}}\le1+3\sqrt{\epsilon},
\]
 where we used again that $\epsilon<\frac{1}{10}$. 
\end{proof}
Now we can prove Theorem \ref{thm:flower-dvoretzky}:
\begin{proof}[Proof of Theorem \ref{thm:flower-dvoretzky}.]
We may of course assume that $\epsilon<\frac{1}{2}$. It was proved
in \cite{Milman2019} that the set $\widetilde{F}=\conv F$ is also
a flower, and that for convex flowers we always have $d\left(\widetilde{F},B_{2}^{n}\right)\le2$. 

$\widetilde{F}\subseteq\RR^{n}$ is an origin-symmetric convex body
which is a bounded distance from $B_{2}^{n}$. From Milman's version
of of Dvoretzky's theorem it follows that there exists $E\subseteq\RR^{n}$
of dimension $c(\epsilon)\cdot n$ such that 
\[
d\left(\conv\left(P_{E}F\right),B_{2}^{E}\right)=d\left(P_{E}\widetilde{F},B_{2}^{E}\right)\le1+\epsilon^{2}/9
\]
(see \cite{Milman1972}, or for example \cite{Artstein-Avidan2015})

Since $\epsilon^{2}/9<\frac{1}{10}$ and since $P_{E}F$ is a flower
we may apply Theorem \ref{thm:flower-stability} and deduce that 
\[
d\left(P_{E}F,B_{2}^{E}\right)\le1+3\sqrt{\epsilon^{2}/9}=1+\epsilon.
\]
\end{proof}

Recall the easy observation from the start of this section that 
\[
d(K,B_{2}^{n})=d\left(K^{\flower},B_{2}^{n}\right).
\]
 From here, many results for flowers follow immediately from the corresponding
results for convex bodies. We demonstrate below one such result:
\begin{theorem}
\label{thm:global-dvoretzky}There is a universal constant $C>0$
such that for every dimension $n,$ every flower $F\subseteq\RR^{n}$
and every $\epsilon>0$ there exists $N\le C\cdot\frac{n}{\epsilon^{2}}$
and $N$ rotations $\left\{ u_{i}\right\} _{i=1}^{N}$ such that for
some $r>0$ and for all $\theta\in S^{n-1}$ we have 
\[
(1-\epsilon)r\le\frac{1}{N}\sum_{i=1}^{N}r_{u_{i}F}(\theta)\le(1+\epsilon)r.
\]
\end{theorem}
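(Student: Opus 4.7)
The plan is to reduce Theorem \ref{thm:global-dvoretzky} to the classical Minkowski-averaging (``global'') form of Dvoretzky's theorem for convex bodies, exactly in the spirit of the authors' remark that $d(K,B_{2}^{n}) = d(K^{\flower},B_{2}^{n})$. First I would set $K = F^{-\flower}$ and invoke the fourth description of flowers recalled in the introduction, which gives $r_{F}(\theta) = h_{K}(\theta)$ for every $\theta \in S^{n-1}$. A short check shows that the flower map is $\operatorname{SO}(n)$-equivariant: since $u B_{x} = B_{u x}$ for any rotation $u$, one has $(u K)^{\flower} = u F$, and consequently $r_{u F}(\theta) = h_{u K}(\theta)$ on the sphere.

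Because support functions are additive under Minkowski sums, this yields the identity
\[
\frac{1}{N}\sum_{i=1}^{N} r_{u_{i} F}(\theta) \;=\; h_{\frac{1}{N}\sum_{i=1}^{N} u_{i} K}(\theta).
\]
The conclusion of the theorem is therefore equivalent to producing rotations $u_{1},\dots,u_{N}$ with $N \le C n/\epsilon^{2}$ such that the Minkowski average $\frac{1}{N}\sum_{i=1}^{N} u_{i} K$ is sandwiched between $(1-\epsilon) r B_{2}^{n}$ and $(1+\epsilon) r B_{2}^{n}$ for some $r > 0$. At this point I would invoke the classical global form of Dvoretzky's theorem for convex bodies (Bourgain--Lindenstrauss--Milman); an exposition can be found in the textbook \cite{Artstein-Avidan2015} already cited in the previous proof. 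Reading the conclusion off via the displayed identity completes the argument, and the count $N = O(n/\epsilon^{2})$ is inherited verbatim from the convex-body statement.

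The one detail to be careful about is symmetry: the classical BLM statement is typically phrased for origin-symmetric convex bodies, whereas a general core $K \in \KK_{0}$ need not be symmetric. This is easily handled by first passing to the symmetrization $\widetilde{K} = \tfrac12(K-K)$ and pairing each chosen rotation with a partner that cancels the centroid drift, or by appealing directly to the (well-known) non-symmetric version of the result; either way the count $N \le C n/\epsilon^{2}$ is preserved. I do not foresee any real obstacle: the entire content of Theorem \ref{thm:global-dvoretzky} is the dictionary $r_{u F} = h_{u K}$ on $S^{n-1}$ together with the corresponding classical statement for convex bodies.
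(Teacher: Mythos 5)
Your argument matches the paper's: both translate via the dictionary $r_{uF}(\theta) = h_{uK}(\theta)$ (with $K = F^{-\flower}$, and noting that the flower map commutes with rotations) and then appeal to the global, Minkowski-average form of Dvoretzky's theorem for convex bodies. One small bibliographic correction: Bourgain--Lindenstrauss--Milman give $N \lesssim n\log(1/\epsilon)/\epsilon^{2}$, so the stated bound $N \le C\,n/\epsilon^{2}$ requires Schmuckenschlager's 1991 refinement, which is precisely the extra reference the paper supplies.
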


(Here $r_{u_{i}F}$ is the radial function of the flower $u_{i}F$).

The statement of this theorem is highly non-trivial, but it is just
an interpretation of known theorems from Asymptotic Geometric Analysis.
This is Theorem 2 from \cite{Bourgain1988} (and one may also see
it in \cite{Bourgain1989a}, Theorem 6.3). However, the estimate in
these papers have an extra factor $\log\frac{1}{\epsilon}$, which
was later eliminated by Schmuckenschlager in \cite{Schmuckenschlager1991}.

An interesting particular case is the case of $F=B_{\theta}$, i.e
a single petal, which is the flower of $I_{\theta}=[0,\theta]^{\flower}$.
In this particular case the result for intervals $\left\{ I_{\theta_{i}}\right\} _{i=1}^{N}$
was known much earlier. It follows from \cite{Figiel1976}, but in
the dual presentation (see also Proposition 6.2 in \cite{Bourgain1989a}
for the result with equal length intervals). 

Actually, in this particular case one may choose just $2n$ petals
and receive an isomorphic version of Theorem \ref{thm:global-dvoretzky}.
In other words, there are two universal constants $c_{2}>c_{1}>0$
such that 
\[
c_{1}r\le\frac{1}{2n}\sum_{i=1}^{2n}r_{u_{i}F}(\theta)\le c_{2}r.
\]
 This is already a consequence of Kashin's theorem from \cite{Kashin1977}.
 




\subsection*{Supports.} The first author is partially supported by the ISF grant 519/17 and the second author is partially supported by ISF grant 1468/19. Both authors are jointly supported by BSF grant 1468/19. 


\bibliography{paper.bbl}


\EndPaper


\end{document}